\newtheorem{theorem}{Theorem}[section]
\newtheorem{proposition}[theorem]{Proposition}
\theoremstyle{definition}
\newtheorem{definition}[theorem]{Definition}
\newtheorem{remark}[theorem]{Remark}
\theoremstyle{definition}
\newtheorem{example}[theorem]{Example}
\newtheorem{notation}[theorem]{Notation}
\newtheorem{convention}[theorem]{Convention}
\title[Describing fibered 3-manifolds]{Explicitly describing fibered 3-manifolds through families of singularly fibered surfaces}
\author{Maggie Miller}
\address{Department of Mathematics\\Stanford University\\Stanford, CA 94301, USA}
\email{maggie.miller.math@gmail.com}
\thanks{The author is supported by a Clay Research Fellowship and a Stanford Science Fellowship.}
\subjclass[2020]{57K30}
\begin{document}
\maketitle

\begin{abstract}
We give an explicit description of a fibration of the complement of the closure of a homogeneous braid, understanding how each fiber intersects every cross-section of $S^3$.
\end{abstract}

\section{Introduction}

In this note, we give an explicit description of a fibration of the complement of the closure of a homogeneous braid (e.g.\ a torus knot). These links have long been known to be fibered, since Stallings \cite{stallings} observed that a standard Seifert surface for such a link decomposes into a Murasugi sum of fiber surfaces for torus knots.


Abstractly proving the existence of a fibration on a knot or link complement is generally straightforward, since (for example) fiberedness is deteected by knot or link Floer homology \cite{ni}. However, the author of this paper usually finds it difficult to conceptualize the whole fibration of such a link complement in total, rather than a single fiber. While building fibrations in a 4-dimensional setting, we found that a deeper understanding of fibrations of classical knot complements seemed necessary in order to extrapolate to higher dimensions. We thus worked to explicitly describe how the fibers of certain link complements meet 
every cross-section of a standard Morse function on $S^3$.

We first give an imprecise statement of the main theorem, meant to illustrate the aim of the paper.
\begin{theorem}\label{thm:mainfirst}
Let $L$ be the closure of a homogenous braid $\beta$. 
Then $L$ is a fibered link and we can explicitly describe a fibration $F:S^3\setminus\nu(L)\to S^1$. The level sets of $F$ are simple, in the sense 
 that there is a natural Morse function $h$ on $(S^3, L)$ so that for every $\theta\in S^1$, the restriction of $h$ to the interior of $F^{-1}(\theta)$ has no local minima or maxima. 
\end{theorem}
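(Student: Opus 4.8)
The plan is to make the statement precise by first fixing the Morse function $h$ on $(S^3,L)$ adapted to the braid closure. Put the braid axis as the $z$-axis (compactified to an unknot), and let $h$ be the standard height function on $S^3$ whose level sets are the round 2-spheres $S_\theta$ transverse to the braid axis; arrange $\beta$ to sit in a neighborhood of the axis so that its closure $L$ meets each $S_\theta$ in exactly $n$ points (where $n$ is the braid width), except at finitely many critical levels where the generators $\sigma_i^{\pm 1}$ of $\beta$ occur. Between consecutive critical levels, each sphere $S_\theta$ meets $L$ in $n$ points arranged around a circle, and the picture of $L\cap S_\theta$ evolves by a single band move (a ``saddle'' corresponding to a crossing) as $\theta$ passes a critical level. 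This gives a movie description of $(S^3, L)$ as a sequence of $n$-punctured spheres related by band moves, one band for each letter of $\beta$.

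Next I would construct the fibration $F$ explicitly. Stallings' observation (recalled in the introduction) gives a fiber surface $\Sigma$ as the Seifert surface obtained by Seifert's algorithm on the standard diagram — $n$ disks (one per strand, each a disk in some $S_\theta$) joined by half-twisted bands (one per letter). The key point is to describe $\Sigma_\theta := \Sigma \cap S_\theta$ for every $\theta$ and to see that this family sweeps out $S^3\setminus\nu(L)$ as one rotates the fiber; equivalently, I would exhibit the monodromy as a composition of Dehn twists dual to the bands. I expect to set up $F$ by describing its restriction to each region between critical levels: on such a region $S^3$ looks like $S^2\times I$ (minus the $n$ strands), the fiber surface restricted there is a product $\Sigma_\theta \times I$, and $F$ is essentially given by the angular coordinate on each punctured $S^2$ arranged to be compatible across the band moves. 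Homogeneity of $\beta$ (all occurrences of each generator have the same sign) is what makes the bands consistently oriented so that the resulting surface is a fiber and the monodromy is a product of right-handed (or left-handed, per column) twists — this is exactly Stallings' hypothesis and I would invoke it rather than reprove it.

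The content of the ``simple level sets'' claim is then the following: for each $\theta\in S^1$, the surface $F^{-1}(\theta)$ is isotopic to the standard fiber $\Sigma$ pushed to ``angular position $\theta$,'' and I want $h|_{\mathrm{int}\,F^{-1}(\theta)}$ to have no interior maxima or minima — only index-$1$ critical points (saddles). I would verify this by building $F^{-1}(\theta)$ concretely as a union over the inter-critical slabs of pieces of the form (arc in $S_t$)$\,\times\,$(sub-interval of $t$-values), together with the half-twisted bands; each such piece is monotone in $h$, so the only critical points of $h|_{F^{-1}(\theta)}$ occur where two monotone sheets meet over a band, and there the local model is a saddle. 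Checking that no max/min is created requires tracking, slab by slab, that every arc of $\Sigma_\theta \cap S_t$ has its two endpoints on $L$ at distinct heights except where a band forces a saddle — i.e.\ that the bands can be positioned to run ``monotonically'' in $h$.

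The main obstacle I anticipate is precisely this last monotonicity: a priori a band of $\Sigma$ joining two strands might have to ``turn around'' in the $h$-direction, creating a cancelling max/min pair on $F^{-1}(\theta)$, and ruling this out means choosing the embedding of $\Sigma$ (and the parametrization $F$) with care so that the band for each letter $\sigma_i^{\pm1}$ is inserted at and stays near its own critical level of $h$. Managing the bookkeeping as $\theta$ varies — ensuring the \emph{same} careful positioning works simultaneously for all $\theta$, and that the surfaces $F^{-1}(\theta)$ genuinely foliate the complement without overlaps — is where the real work lies; I would handle it by giving one global normal form for the product regions and a single local model for the band-crossing regions, then checking the gluing is consistent under the monodromy.
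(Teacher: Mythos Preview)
Your setup is confused in a way that matters. If $h$ is the standard height on $S^3$ with level sets round $2$-spheres and the braid axis is the (vertical) $z$-axis, then the closure $L$ of a braid on $n$ strands meets a regular level sphere in $2n$ points, not $n$: half the strands are in the braid $\beta$ going up, half are the trivial ``return'' strands going down. Moreover, nothing special happens to $|L\cap S_t|$ at a crossing---a generator $\sigma_i^{\pm1}$ does not change the number of intersection points with a level set; only local extrema of $h|_L$ do that. What you describe (level sets meeting $L$ in $n$ points, with critical levels at the crossings) is the picture for the \emph{angular} coordinate around the braid axis, which is an $S^1$-valued function on the complement of the axis, not a Morse function on $S^3$. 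The paper's $h$ is genuinely the latter, and the construction must handle the caps $B^3_\pm$ where the extrema of $h|_L$ live; your outline omits these entirely.

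More seriously, the strategy of ``build one Seifert surface $\Sigma$, then rotate it by the monodromy'' does not by itself give control of $h|_{F^{-1}(\theta)}$ for all $\theta$ simultaneously. The monodromy is a diffeomorphism of the abstract fiber, and extending its time-$\theta$ flow to an ambient isotopy of $S^3\setminus\nu(L)$ can be done in many ways, almost none of which preserve the Morse-theoretic niceness of $\Sigma$ with respect to $h$. You identify this as ``where the real work lies,'' but you have not supplied the idea that does that work. The paper resolves this by changing viewpoint: rather than constructing the fibers one at a time, it constructs $F$ \emph{level set by level set}, specifying for each $t$ a singular fibration $f_t:h^{-1}(t)\to S^1$ (so that $F(x)=f_{h(x)}(x)$). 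Explicit local ``movie'' models---one for a minimum of $h|_L$, one for a maximum, and a ``band movie'' for each crossing $\sigma_j^{s_j}$---are glued according to the braid word. Homogeneity enters because the band movie requires the arc to pass through an $\times$-singularity in a fixed direction determined by $s_j$; this is precisely where mixing signs of $\sigma_j$ would break the construction. The absence of interior extrema on every fiber then follows automatically, since none of the local models introduce one. Your outline gestures toward a ``single local model for the band-crossing regions,'' which is the right instinct, but the actual mechanism that makes it compatible across all $\theta$ at once is this dual (level-set-wise rather than fiber-wise) construction, and that is missing from your plan.
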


The proof of Theorem \ref{thm:mainfirst} is an explicit construction, which we are sharing with the belief that this can motivate arguments in 4-dimensional (or higher) manifolds where the usual techniques in 3-dimensional topology fail. (In the author's mind, this paper is mostly an exposition on the structure of a fibration of a 3-manifold over $S^1$.) We focus on closures of homogeneous braids as a natural, well-understood family of fibered links. We obtain a very restrictive statement about the position of the fibers of such a link that might be of interest to knot theorists and 3-dimensional topologists. 

We give the precise statement of Theorem \ref{thm:mainfirst} now, which we will prove constructively in the following sections of this paper.

\begin{theorem}\label{thm:main}
Let $L$ be the closure of a homogenous braid $\beta$ on $b$ strands, braided with respect to the standard Morse function $h:S^3\to\mathbb{R}$, with the minima and maxima of $h$ also minima and maxima of $h|_L$. 
 Then there is a fibration $F:S^3\setminus\nu(L)\to S^1$ so that 
 simultaneously for all $\theta$, the restriction of $h$ to the interior of $F^{-1}(\theta)$ is Morse with no local minima nor maxima and the restriction of $h$ to the boundary of $F^{-1}(\theta)$ is Morse with $b$ local minima and $b$ local maxima.
\end{theorem}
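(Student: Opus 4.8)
The plan is to build the fibration by hand, starting from Stallings' observation that the standard Seifert surface $\Sigma$ of a homogeneous braid closure is a Murasugi sum of fiber surfaces for torus links (in fact, of Hopf bands, one for each syllable of each generator in $\beta$). First I would set up coordinates: write $S^3$ as the union of two solid tori along a Heegaard torus, with $h$ the standard height function whose level sets away from the two critical points are $2$-spheres meeting the braid axis in two points and meeting $L$ in $b$ points. The braid $\beta$ lives in a cylinder $D^2\times[0,1]$ between these level spheres, and $\Sigma\cap(D^2\times\{t\})$ is, for generic $t$, a disjoint union of $b$ arcs; the claim that $h|_{F^{-1}(\theta)}$ has no interior extrema is ultimately the statement that each fiber, sitting in this cylinder, is a union of arcs and bands that only move monotonically in the height direction.

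Next I would construct $F$ explicitly. Rather than appealing to the abstract monodromy, I would describe $F^{-1}(\theta)$ for each $\theta\in S^1$ directly as an embedded surface: start with $\Sigma=F^{-1}(0)$, and push it through the complement by a flow that realizes the Murasugi-sum decomposition as a sequence of ``de-plumbing then re-plumbing'' moves, à la Gabai, one Hopf band at a time. The key point is to choose the isotopy so that at every intermediate time the surface remains transverse (in a controlled way) to the spheres $h^{-1}(r)$: each elementary move is supported near one band of the plumbing, and I would check that such a move can be performed within a thin slab $h^{-1}([r_0,r_1])$, inducing on each cross-section only the birth/death or saddle moves that keep the arc pattern free of closed components and free of height-extrema. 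Running through all the bands and all the way around $S^1$ recovers $\Sigma$ by the usual ``monodromy = product of Dehn twists about the plumbing circles'' identity for Murasugi sums of fiber surfaces.

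To control the boundary behavior, note that $\boundary F^{-1}(\theta)$ is always (a pushoff of) $L$, so $h|_{\boundary F^{-1}(\theta)}$ is just $h|_L$ up to isotopy, which by hypothesis has exactly one max and one min on each of the $b$ ``strand-tops'' — wait, more precisely, since the minima and maxima of $h$ are extrema of $h|_L$ and $\beta$ is a braid on $b$ strands closed up, $h|_L$ is Morse with $b$ minima and $b$ maxima; I would verify that the pushoff used to define $\boundary F^{-1}(\theta)$ can be taken to keep this count, i.e.\ the framing curves on $\boundary\nu(L)$ are chosen to be Morse with $b$ maxima and $b$ minima for every $\theta$ simultaneously. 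This is a matter of choosing the normal framing of $\Sigma$ near $L$ to be ``horizontal'' with respect to $h$.

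The main obstacle I expect is the \emph{simultaneity}: it is easy to isotope a single fiber into good position, and easy to see the monodromy abstractly, but arranging that \emph{every} $F^{-1}(\theta)$ is in Morse position with no interior extrema requires the isotopy $\theta\mapsto F^{-1}(\theta)$ to be compatible with the slicing by $h$ at all times at once. Concretely, the hard part is organizing the plumbing moves so that their supporting slabs $h^{-1}([r_0,r_1])$ for different bands are either disjoint or nested in a way that lets the moves be performed independently, and checking that a Hopf-band plumbing/de-plumbing, done inside such a slab, never creates a local maximum or minimum of $h$ on the moving surface — i.e.\ that the only cross-sectional events are arc-slides and saddle moves, never the birth of a capped-off disk with an interior critical point. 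I would handle this by an explicit local model for the plumbing region (a standard Hopf band sitting in a ball, with $h$ restricting to a linear function there) and a dimension count showing the bad events are avoidable after a small perturbation.
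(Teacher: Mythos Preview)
Your approach is genuinely different from the paper's and runs orthogonally to it. The paper does not track a single fiber $\Sigma$ as it flows through the complement; instead it works cross-sectionally, defining for each height $t$ a singular fibration $f_t:h^{-1}(t)\to S^1$ (so the leaves of $f_t$ are exactly the curves $F^{-1}(\theta)\cap h^{-1}(t)$, for all $\theta$ at once) and then assembling the $f_t$ into the total function $F$. The simultaneity problem you correctly flag as the main obstacle is \emph{built in} to this viewpoint: one never has to check that a given fiber has no interior $h$-extrema, because the local models used to evolve $f_t$ past minima and maxima of $L$ and past crossings of $\beta$ (the paper's ``band movie'') are chosen precisely so that no leaf of $F$ ever acquires an interior critical point. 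In short, the paper parametrizes by $t$ and draws all $\theta$ simultaneously; you parametrize by $\theta$ and hope to control all $t$ simultaneously.

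There is a genuine gap in your plan. The localization ``each elementary move is supported near one band of the plumbing \dots\ within a thin slab $h^{-1}([r_0,r_1])$'' conflates two different things. What is localized in the Murasugi-sum picture is the \emph{return map}: the monodromy is a product of Dehn twists supported near the plumbing cores. The flow that carries one page to the next is not localized; as $\theta$ ranges over any nontrivial subinterval of $S^1$, the pages $F^{-1}(\theta)$ sweep out a full-dimensional region of $M$ that is not contained in any thin $h$-slab, so a verification ``inside a slab'' cannot see the whole fiber. Relatedly, the final step --- ``a dimension count showing the bad events are avoidable after a small perturbation'' --- is the wrong kind of argument here: the theorem asserts an exact count of zero interior extrema for \emph{every} fiber, not a generic transversality statement, and a perturbation that kills an extremum on one fiber can create one on a neighboring fiber. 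To make your route succeed you would need an explicit, $\theta$-uniform description of how each page sits relative to $h$, which in practice amounts to reconstructing the cross-sectional foliations $f_t$ the paper builds directly.
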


\section{Braid conventions}

We choose some conventions when discussing links in braid position.
\begin{definition}
Decompose $S^3=B^3_+\cup(S^2\times I)\cup B^3_-$. Let $h:S^3\to\mathbb{R}$ be Morse so that $h$ has one local maximum in $B^3_+$, one local minimum in $B^3_-$ and the restriction $h|_{S^2\times I}$ is projection to $I$.

Further decompose $S^2\times I=(D^2_L\times I)\cup(D^2_R\times I)$. Let $\beta$ be a braid in $D^2_L\times I$ on $b$ strands. Let $L$ be a link with
\[
L\cap (D^2_L\times I)=\beta\qquad\text{and}\qquad
L\cap (D^2_R\times I)=\{b\text{ points}\}\times I,\]
and so that $L\cap B^3_+$ and $L\cap B^3_-$ each consist of $b$ strands, on each of which $h$ has one local critical point, with two of these extrema being the two critical points of $h$. We say $L$ is in {\emph{braid position with respect to $h$}} and is the {\emph{braid closure of $\beta$}}. See Figure \ref{fig:braid}.
\end{definition}

\begin{figure}
\centering
\labellist
\pinlabel{\textcolor{gray}{$B^3_-$}} at 0 70
\pinlabel{\textcolor{gray}{$D^2_L\times I$}} at -25 200
\pinlabel{\textcolor{gray}{$D^2_R\times I$}} at 325 200
\pinlabel{\textcolor{gray}{$B^3_+$}} at 0 330
\pinlabel{$\beta$} at 20 240
\endlabellist
\includegraphics[width=50mm]{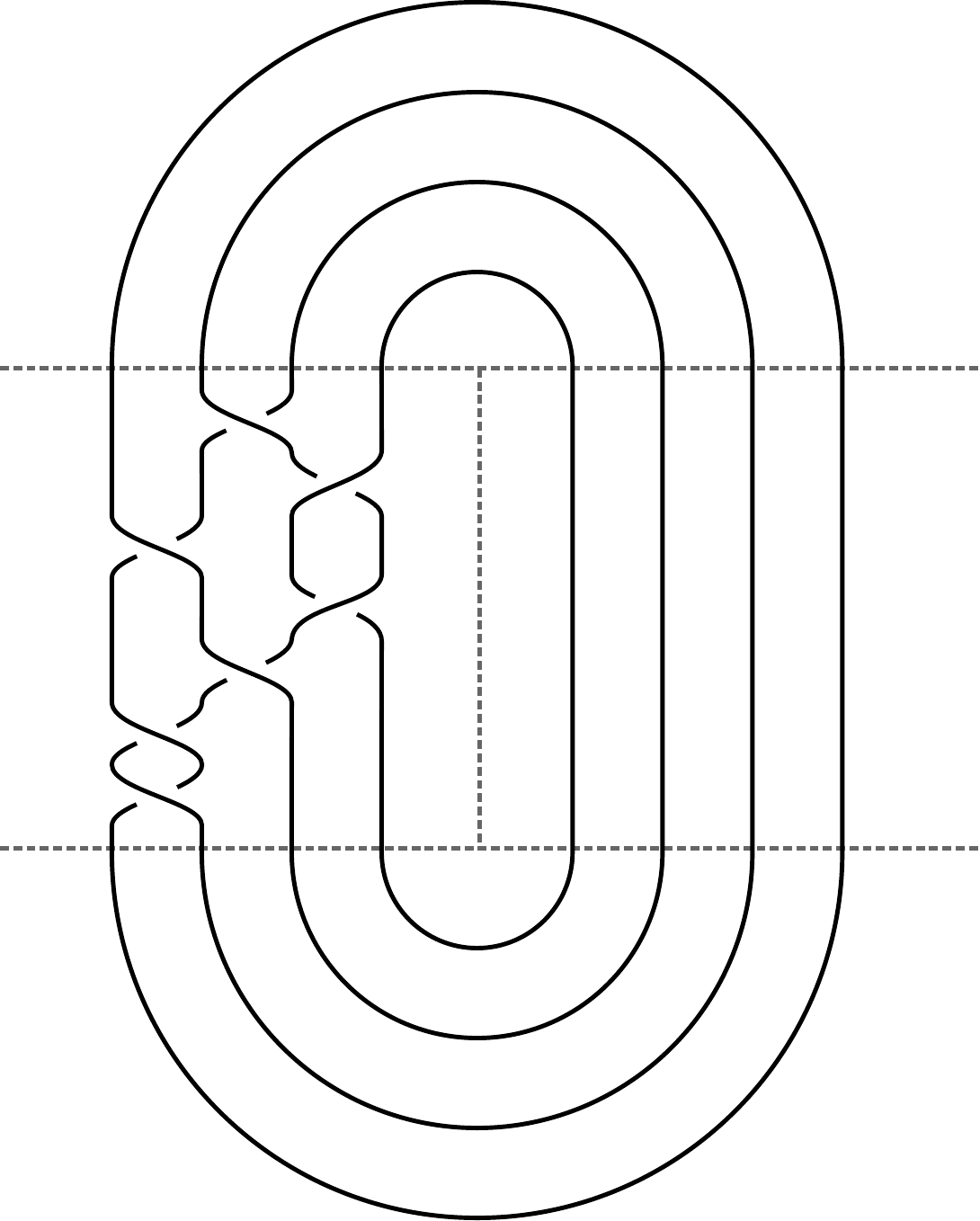}
\caption{A link $L$ in braid position as the braid closure of $\beta$. Here, $\beta$ is a homogeneous braid since its given diagram is described by the braid word $\sigma_1^{-1}\sigma_1^{-1}\sigma_2^{-1}\sigma_3\sigma_1^{-1}\sigma_3\sigma_2^{-1}$, which is a word in the letters $\sigma_1^{-1},\sigma_2^{-1},\sigma_3$ with each letter appearing at least once.}\label{fig:braid}
\end{figure}

Recall that a braid diagram of a braid $\beta$ on $b$ strands can be written as a word in $\sigma_1,\sigma_1^{-1},\ldots,\sigma_{b-1},\sigma_{b-1}^{-1}$, where $\sigma_i$ is a diagram involving one positive half-twist between the $i$-th and $(i+1)$-th strands and no other crossings, and $\sigma_i^{-1}$ is similar but with a negative half-twist. We will use the convention that the letters in a word $w$ from left to right describe crossings in a diagram of $\beta$ from bottom to top. This means that if $w_1,w_2$ are words for $\beta_1,\beta_2$, then $w_1w_2$ is the word for the braid obtained by stacking $\beta_2$ above $\beta_1$.


\begin{definition}
A braid is {\emph{homogeneous}} if it can be written as a word in $\sigma_1^{s_1},$ $\ldots,$ $\sigma_{b-1}^{s_{b-1}}$ for some $s_1,\ldots, s_{b-1}\in\{-1,1\}$, with each letter appearing at least once.

In simpler terms, a braid is homogeneous if it can be written as a braid word so that for each $i$, the word includes one of $\sigma_i$ and $\sigma_i^{-1}$, but not both. See Figure \ref{fig:braid}.
\end{definition}


\section{Background: constructing fibrations}
In this section, we discuss local constructions of fibrations in dimension three. Our plan is to fix a height function $h:M^3\to\mathbb{R}$ whose regular level sets are surfaces and whose singular level sets are surfaces with well-understood singularities. (In the case that $M$ is closed, $h$ is simply a Morse function.) We will then construct a family of functions $f_t$ mapping each level set $h^{-1}(t)$ to the circle, with rules on how $f_t$ changes with $t$ to ensure that the function $F:M\to S^1$ given by $F(x)=f_{h(x)}(x)$ is a fibration. The advantage of this approach is that while drawing a collection of surfaces in a 3-manifold may seem daunting, drawing a collection of curves (level sets of $f_t$) in a surface (a level set of $h$) may seem more approachable.


\begin{convention}
In this paper, we use the terms ``singular fibration" and ``height function" rather than ``circular Morse function" or ``Morse function" on manifolds with boundary to make it clear that we do not expect functions to be locally constant on boundary. For us, a {\emph{height function}} $h:M\to\mathbb{R}$ is a smooth function that has the following properties.
\begin{itemize}
\item $h|_{\partial M}$ is Morse.
\item $h$ is Morse on the interior of $M$.
\item The level sets of $h$ meet $\partial M$ transversely away from critical points of $h|_{\partial M}$.
\end{itemize}
 For example, if $L$ is a link in $S^3$ and $h:S^3\to\mathbb{R}$ is Morse with  $h|_L$ also Morse, then $h$ restricts to a height function on $M:=S^3\setminus\nu(L)$ for $\nu(L)$ a suitable tubular neighborhood of $L$. 
\end{convention}

We first observe that given a fibration $F:M^3\to S^1$ and a height function $h:M^3\to\mathbb{R}$, we generically expect $F$ to induce a singular fibration on each level set of $h$.
\begin{definition}\label{def:singularfibsurface}
A {\emph{singular fibration}} on a compact surface $\Sigma$ is a smooth map $f:\Sigma\to S^1$ with the following properties.
\begin{itemize}
\item The restriction $f|_{\partial\Sigma}$ is Morse.
\item For all but finitely many $\theta$, $f^{-1}(\theta)$ is a compact, properly embedded 1-manifold.
\item For all $\theta$, $f^{-1}(\theta)$ is a compact, properly embedded 1-manifold away from a finite number of singularities of the four types shown in Figure \ref{fig:leafsingularity}. We will often refer to interior wedgepoints (first image in Figure \ref{fig:leafsingularity}) as {\emph{$\times$-singularities}}.
\end{itemize}

We refer to each $f^{-1}(\theta)$ as a {\emph{leaf}} of $f$.
\end{definition}

\begin{figure}
\centering
\labellist
\endlabellist
\includegraphics[width=100mm]{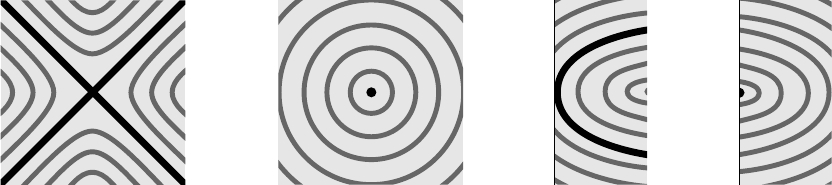}
\caption{Local models near singular points of a leaf of a singular fibration $f:\Sigma\to S^1$ (away from singularities of $\Sigma$). {\bf{First:}} A leaf includes an $\times$-singularity. {\bf{Second:}} A leaf component is a single point in the interior of $\Sigma$.
{\bf{Third:}} A leaf has an isolated tangency to $\partial\Sigma$.
{\bf{Fourth:}} A leaf component is a single point in the boundary of $\Sigma$.
{}}\label{fig:leafsingularity}
\end{figure}

We may extend Definition \ref{def:singularfibsurface} to apply to some singular surfaces. Here, by {\emph{singular surface}} we mean a compact topological space embeddable in $\mathbb{R}^4$ containing a finite singularity set $S$ whose complement is a surface. 

\begin{definition}\label{def:singularfibration}
Let $\Sigma$ be a compact, singular surface with finite singularity set $S=\{p_1,\ldots, p_m,q_1,\ldots, q_n\}$. 

In words, the points $p_1,\ldots, p_m$ are the singularities on the boundary of $\Sigma$. These points come in three types: some are seemingly interior points that we have artificially declared to be boundary, some are wedge points in $\partial\Sigma$, and others are isolated points that we declare to be in the boundary of $\Sigma$. The points $q_1,\ldots, q_n$ are the singularities in the interior of $\Sigma$. These points come in two types: the singular points of double cones and isolated points that we declare to be in the interior of $\Sigma$.

To be precise, we require each $p_i$ to have a neighborhood that is homeomorphic to one of: $\mathbb{R}^2$ or $\{(x,y)\mid|x|\ge|y|\}$ or $\{(0,0)\}$,  with $p_i\mapsto(0,0)$. We require each $q_i$ to have a neighborhood that is homeomorphic to one of: $\{(x,y,z)\mid x^2+y^2=|z|\}$ or $(0,0)$, with $q_i\mapsto(0,0)$.



A smooth (away from $S$) function $f:\Sigma\to S^1$ is a {\emph{singular fibration}} if $f$ restricts to a singular fibration on $\Sigma\setminus\nu(S)$ as in Definition \ref{def:singularfibsurface} and in each component of $\nu(S)$, $f$ restricts to one of the local models shown in Figure \ref{fig:singularsurfaces}.

\end{definition}

\begin{figure}
\centering
\labellist
\pinlabel{boundary singularities} at 170 250
\pinlabel{interior singularities} at 170 100
\endlabellist
\vspace{.2in}
\includegraphics[width=90mm]{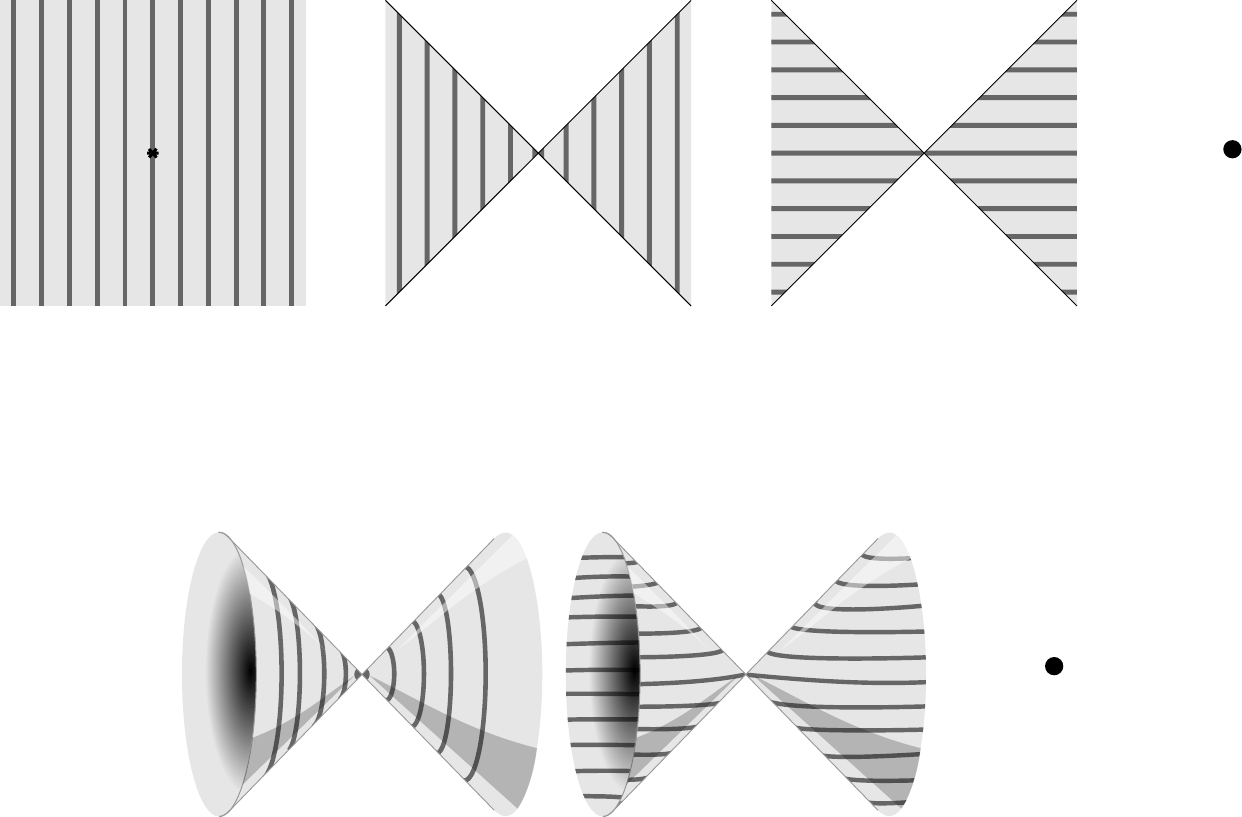}
\caption{Near a singular point of a surface $\Sigma$, we require the leaves of a singular fibration $f$ to have one of these pictured local models.}\label{fig:singularsurfaces}
\end{figure}

In Definition \ref{def:singularfibration}, we restrict to the given types of singularities precisely because these are the singularities that appear in level sets of a height function $h:M\to\mathbb{R}$. Interior singular points arise in $h^{-1}(t)$ when the Morse function $h|_{\mathring{M}}$ has a critical point at height $t$. Boundary singular points arise in $h^{-1}(t)$ when the Morse function $h|_{\partial M}$ has a critical point at height $t$.



We use singular fibrations of surfaces to construct a fibration of a 3-manifold by considering the level surfaces of a height function.

\begin{definition}
Given a height function $h:M^3\to\mathbb{R}$, a {\emph{movie of singular fibrations}} on $M^3$ is a family of smooth maps $f_t:h^{-1}(t)\to S^1$ with each $f_t$ a singular fibration on the singular surface $h^{-1}(t)$ such that the functions $f_t$ vary smoothly with $t$, i.e.\ $F(x)=f_{h(x)}(x)$ is a smooth map from $M^3$ to $S^1$. We refer to $F$ as the {\emph{total function}} of the movie $f_t$.
\end{definition}

By constructing a movie of singular fibrations on $M^3$ and keeping track of how singular leaves of $f_t$ vary as $t$ increases or decreases, we can arrange for the total function $F:M\to S^1$ to be a fibration. In \cite{miller} we referred to the ``type" of singularities in leaves of $f_t$, determined by the sign of $\frac{d}{dt}f_t$ at that singularity. This language is less useful in this dimension due to symmetry of $\times$-singularities. (Or in other words, this notation is less useful because an index-1 critical point of a Morse function on a surface remains an index-1 point when turned upside down.) Instead, we add arrow decorations to contour maps of $f_t$, as shown in Figures \ref{fig:arrows} and \ref{fig:singulararrows}. In Figure \ref{fig:singulararrows} we draw only boundary singularities; models of interior singularities can be obtained by doubling. 

\begin{figure}
\centering
\labellist
\pinlabel{$t$} at -20 80
\pinlabel{$t$} at 85 305
\endlabellist
\includegraphics[width=126mm]{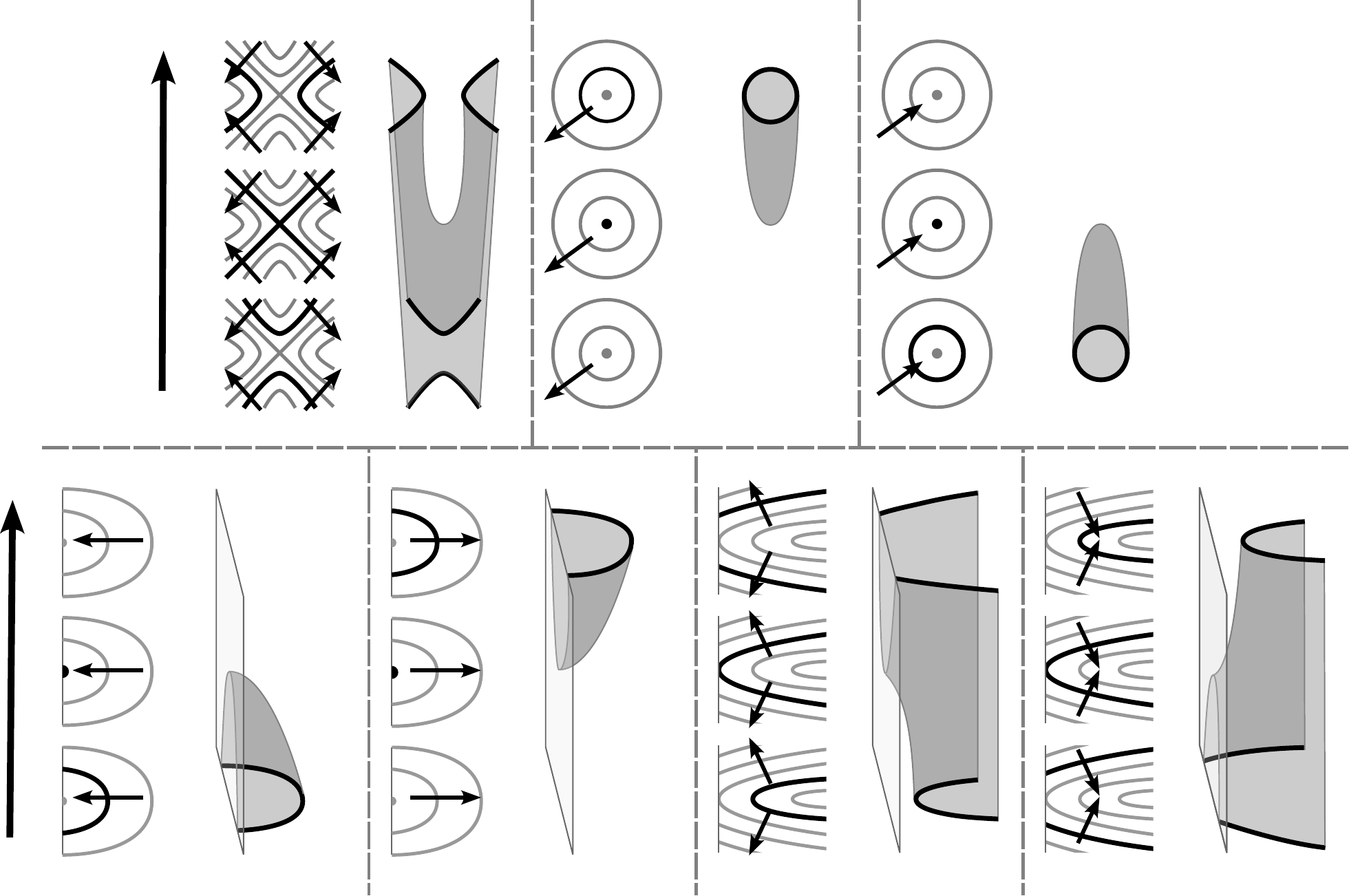}
\caption{Local models near singularities of leaves of $f_t$ that are not also singular points of $h^{-1}(t)$. In all of these models, $\frac{d}{dt}f_t$ is nonzero near the singular point. In each cell, we highlight cross-sections of $F^{-1}(\theta)$ as $t$ increases and give a schematic of $F^{-1}(\theta)$. We include arrows near singularities of leaves of $f_t$ that indicate the behavior of $f_{t\pm\epsilon}$ near the singularity.
}\label{fig:arrows}
\end{figure}

\begin{figure}
\centering
\labellist
\pinlabel{$t$} at -15 150
\pinlabel{$t$} at -15 485
\pinlabel{(i)} at 105 660
\pinlabel{(ii)} at 320 660
\pinlabel{(iii)} at 520 660
\pinlabel{(iv)} at 725 660
\pinlabel{(v)} at 105 10
\pinlabel{(vi)} at 320 10
\pinlabel{(vii)} at 520 10
\pinlabel{(viii)} at 725 10
\endlabellist
\includegraphics[width=126mm]{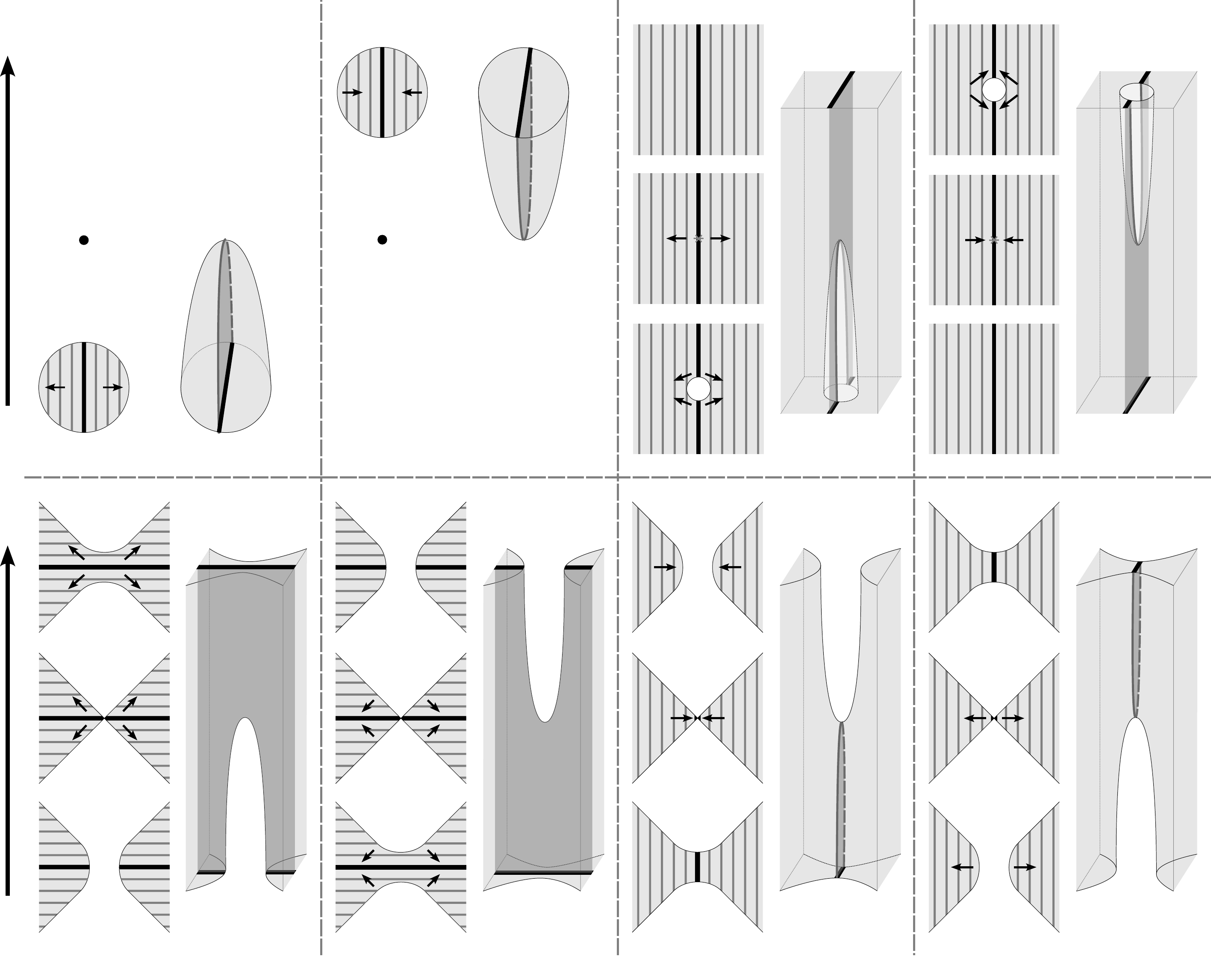}
\caption{Local models near singular leaves of $f_t$ that are also singular points of $h^{-1}(t)$. In each cell, we highlight cross-sections of $F^{-1}(\theta)$ as $t$ increases and give a schematic of $F^{-1}(\theta)$.In each of these models, $\frac{d}{dt}f_t$ is zero at the singular point of $h^{-1}(t)$. We also include arrows in nonsingular cross-sections as in Figure \ref{fig:arrows}. These arrows are drawn assuming that the non-highlighted leaves of $F$ are also locally nonsingular. We draw each singularity as a boundary singularity; doubling these figures yields corresponding local models of singular leaves of $f_t$ with interior singular points.}\label{fig:singulararrows}
\end{figure}



\begin{proposition}\label{prop:totalfunction}
The total function $F$ of a movie of singular fibrations $f_t$ is a fibration as long as the following are true.
\begin{itemize}
\item Near singularities in leaves of $f_t$ that are not also singularities of the surface $h^{-1}(t)$, $\frac{d}{dt}f_t$ is nonvanishing. (That is, we may always draw arrows on contour sets of $f_t$ as in Figures \ref{fig:arrows} and \ref{fig:singulararrows}.
\item At singularities of $h^{-1}(t)$, $\frac{d}{dt}f_t$ vanishes and the arrows decorating $f_{t\pm\epsilon}$ are as in one of the models of Figure \ref{fig:singulararrows}.
\item The maps $f_0$ and $f_1$ are both fibrations.
\end{itemize}
\end{proposition}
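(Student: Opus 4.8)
The plan is to show that $F$ is a proper submersion whose restriction to $\partial M$ is also a submersion, and then to invoke Ehresmann's theorem in its form for manifolds with boundary: such a map is a locally trivial fibration, and since $M$ is compact and the base $S^1$ is connected, it is a fibration. (If $M$ has corners along $\partial(h^{-1}(0))\cup\partial(h^{-1}(1))$, one first rounds them.) Thus the whole proof reduces to checking, point by point, that $F$ has no critical points and that $F|_{\partial M}$ has none either.

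Away from the finitely many singular points of the singular surfaces $h^{-1}(t)$, one may choose local coordinates $(t,u)$ on $M$ in which $h$ is the first coordinate, so that $F(t,u)=f_t(u)$ and $dF=\tfrac{\partial f_t}{\partial t}\,dt+d_u f_t$. If the point in question is a regular point of the leaf of $f_t$ through it --- in particular, if it is not one of the four leaf singularities of Figure \ref{fig:leafsingularity} --- then $d_u f_t\ne 0$ and $F$ is a submersion there; the same covector computation, carried out on the surface $\partial M$ with its height function $h|_{\partial M}$, shows that $F|_{\partial M}$ is a submersion at regular points of $f_t|_{\partial(h^{-1}(t))}$. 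At the leaf singularities of $f_t$ that are \emph{not} singular points of $h^{-1}(t)$ --- the $\times$-singularities, isolated interior leaf points, isolated boundary leaf points, and boundary tangencies of Figures \ref{fig:leafsingularity} and \ref{fig:arrows} --- $F$ or $F|_{\partial M}$ fails to be an obvious submersion precisely because a level-set derivative ($d_u f_t$, or $d_u(f_t|_\partial)$ in the boundary-tangency case) vanishes, and the first hypothesis supplies exactly the missing transversality in the $t$-direction by guaranteeing $\tfrac{\partial f_t}{\partial t}\ne 0$ at that point. One also reads off from Figure \ref{fig:arrows} that $F$ is locally a product near each such point, but this already follows from the submersion statement.

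It remains to treat the finitely many singular points of the surfaces $h^{-1}(t)$, which are exactly the critical points of $h$ on $\mathring M$ and of $h|_{\partial M}$. Near such a point the pair $(M,h)$ is a standard Morse model, and by the second hypothesis $\tfrac{\partial f_t}{\partial t}$ vanishes at that point while the arrows decorating $f_{t\pm\epsilon}$ match one of the eight boundary models (i)--(viii) of Figure \ref{fig:singulararrows}, or --- for an interior critical point of $h$ --- the double of one of them. For each of these finitely many decorated local models one checks by inspection --- which is exactly what the cross-section columns of Figure \ref{fig:singulararrows} record --- that $F$ on a small neighborhood of the singular point is a trivial fibration over an arc of $S^1$; in particular $F$, and $F|_{\partial M}$ when the point lies on $\partial M$, is a submersion there. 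Finally, the third hypothesis handles the two level sets $h^{-1}(0)$ and $h^{-1}(1)$, which lie in $\partial M$ and on which $F|_{\partial M}$ restricts to $f_0$ and $f_1$: since a movie of singular fibrations only asks each $f_t$ to be a \emph{singular} fibration, it is precisely the assumption that $f_0$ and $f_1$ are honest fibrations that makes $F|_{\partial M}$ a submersion along these two surfaces. Combining all cases, $F$ is a submersion with $F|_{\partial M}$ a submersion, and the fibration conclusion follows.

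I expect the main obstacle to be the case analysis of the previous paragraph: there are a fair number of local models of a singular surface point decorated by the data of $f_t$, and one must verify that the two conditions of the proposition --- vanishing of $\tfrac{\partial f_t}{\partial t}$ at the singular point together with the prescribed arrow patterns --- single out exactly the configurations in which $F$ is locally a product. Once the interior models are obtained from the boundary ones by doubling and the bookkeeping is organized via the arrow conventions of Figures \ref{fig:arrows} and \ref{fig:singulararrows}, everything else reduces to the coordinate identity $dF=\tfrac{\partial f_t}{\partial t}\,dt+d_u f_t$ together with the stated (non)vanishing of $\tfrac{\partial f_t}{\partial t}$.
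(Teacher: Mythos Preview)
Your proof is correct and follows the same approach as the paper: verify that the three hypotheses guarantee that $F$ (and $F|_{\partial M}$) has no critical points---equivalently, that every leaf of $F$ is nonsingular---and then conclude that $F$ is a fibration. The paper compresses all of this into two sentences (``This ensures that the leaves of $F$ are nonsingular, so $F:M\to S^1$ is a fibration''), leaving the case analysis and the appeal to Ehresmann implicit, so your write-up is a substantial expansion rather than a different argument.
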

\begin{proof}
Assume the listed conditions hold. This ensures that the leaves of $F$ are nonsingular, so $F:M\to S^1$ is a fibration.
\end{proof}

\begin{remark}
When the total function $F$ of $f_t$ is a fibration, then $h$ restricts to a height function on each fiber of $F$. Figures \ref{fig:arrows} and \ref{fig:singulararrows} are simply models of interior and boundary critical points of $h|_{F^{-1}(\theta)}$. 
\end{remark}

\begin{notation}
Assume $f_t$ has arrow decorations. 
A leaf of $f_t$ containing an $\times$-singularity divides a small neighborhood of that singularity into four regions. Two of these regions (opposite to each other) locally have arrows pointing into them; we call these the {\emph{inward regions}}. The other two regions locally have arrows pointing away from them; we call these the {\emph{outward regions}}. See Figure \ref{fig:xsingularity}.
\end{notation}

\begin{figure}
\centering
\labellist
\pinlabel{\rotatebox{90}{inward region}} at -7 42
\pinlabel{\rotatebox{90}{inward region}} at 88 42
\pinlabel{outward region} at 42 -6
\pinlabel{outward region} at 42 88
\endlabellist
\vspace{.1in}
\includegraphics[width=30mm]{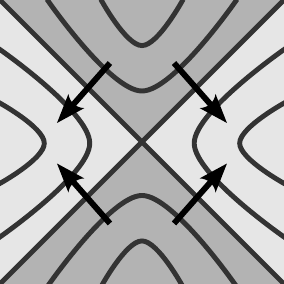}
\caption{Inward and outward regions of a neighborhood of an $\times$-singularity.}\label{fig:xsingularity}
\end{figure}



\section{Small movies}
Here we give three main examples of movies of singular fibrations that we will use to construct more interesting movies. 
To prove Theorem \ref{thm:main}, we will only need to consider singular fibrations on surfaces with only boundary singularities. From now on, we will only consider singular surfaces with no interior singularities.

\begin{example}\label{ex:min}
Parametrize a 3-ball $B^3$ as $I\times I\times I$, and let $\pi:B^3\to I$ be projection to the third factor. Fix smooth $f_0:\pi^{-1}(0)\to S^1$ so that the level sets of $f_0$ are lines of the form $0\times\{I\}\times\{0\}$.

Let $\gamma$ be an arc properly embedded in $B^3$ so that $\pi(\partial\gamma)=1$, and $\pi|_{\gamma}$ is Morse with exactly one local minimum that is contained in $\pi^{-1}(1/2)$, and projecting $\gamma$ to $I\times I\times 0$ yields an arc contained in a level set of $f_0$.

Let $M^3=B^3\setminus\nu(\gamma)$. We choose the thickening of $\gamma$ so that $h:=\pi|_M$ has two singular level sets. See Figure \ref{fig:minmovie}. In the top row of Figure \ref{fig:Lminimum} we illustrate an extension of $f_0$ to a movie of singular fibrations $f_t:h^{-1}(t)\to S^1$. This is a concatenation of movies (iv) and (viii) of Figure \ref{fig:singulararrows}, so the arrows are consistent with the requirements of Proposition \ref{prop:totalfunction}. 
The leaves of the resulting total function $F:M\to S^1$ are nonsingular in their interior, but there are two $\times$-singularities in leaves of $f_1$.
\end{example}

\begin{figure}
\centering
\labellist
\pinlabel{$t$} at 313 173
\pinlabel{$t$} at 313 21
\endlabellist
\includegraphics[width=126mm]{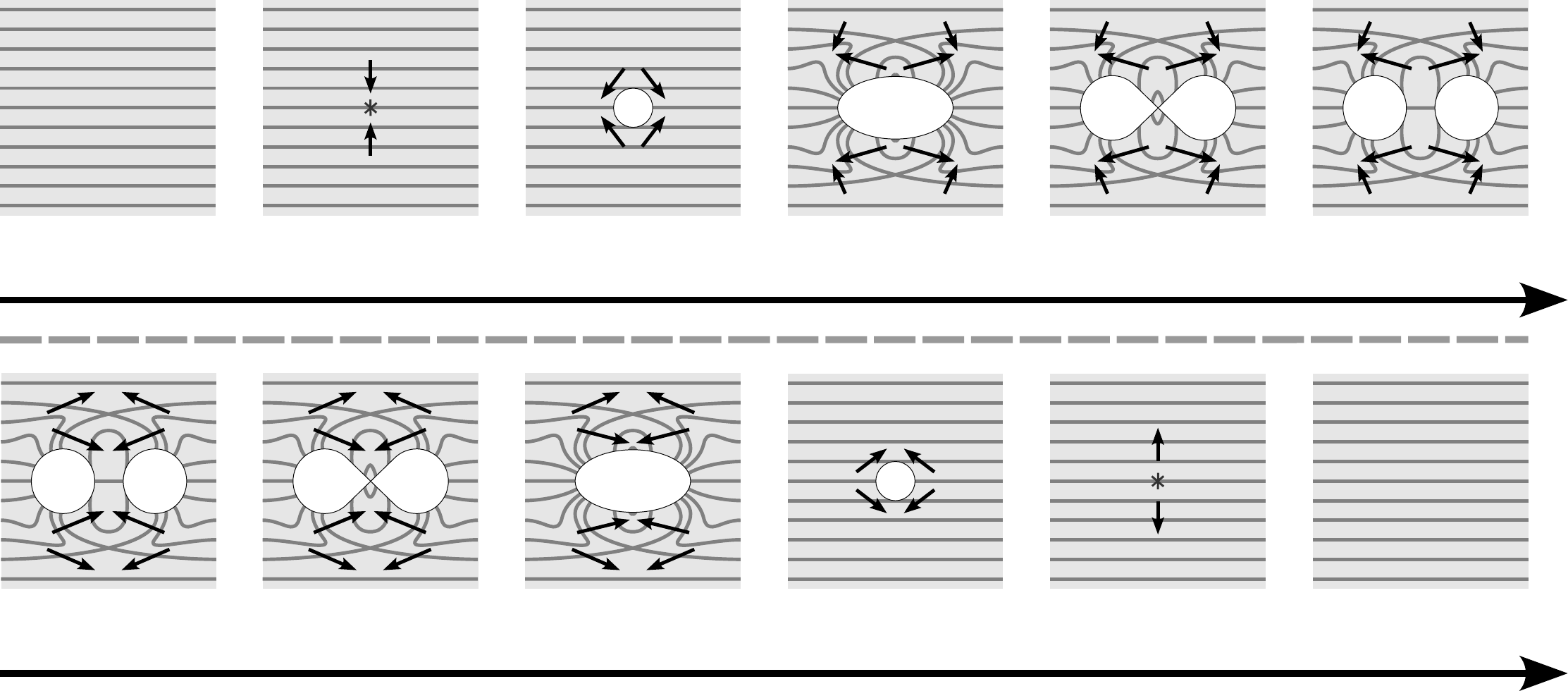}
\caption{{\bf{Top row:}} the movie of Example \ref{ex:min}. {\bf{Bottom row:}} the movie of Example \ref{ex:max}.}\label{fig:Lminimum}
\end{figure}

\begin{figure}
\includegraphics[width=75mm]{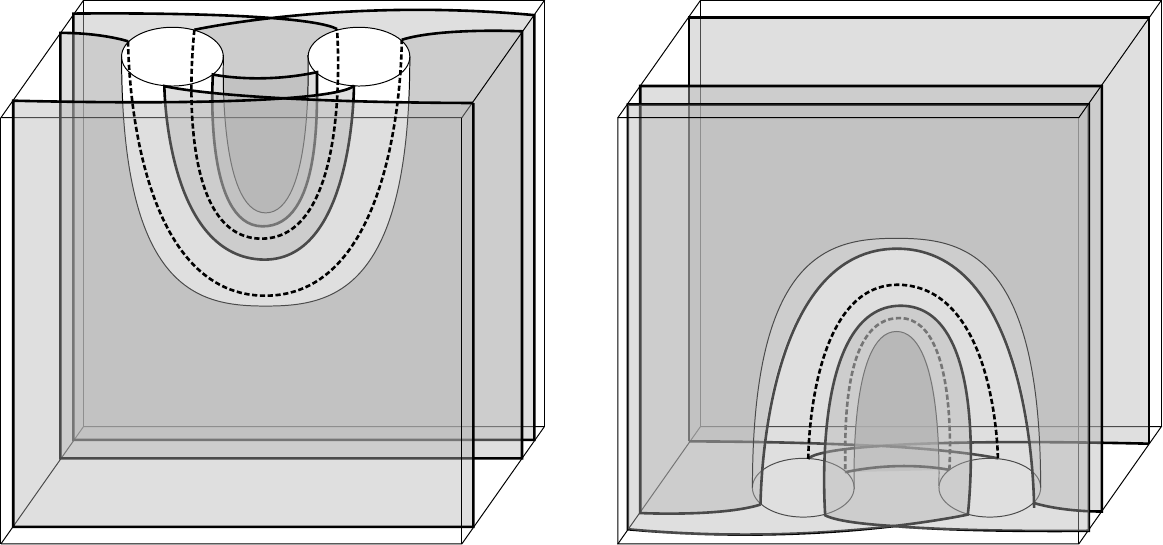}
\caption{Some leaves of the total function of the movies described in Example \ref{ex:min} (left) and Example \ref{ex:max} (right).}\label{fig:minmovie}
\end{figure}

When attempting to fiber the complement of a knot or link $L$, we can use the movie in Example \ref{ex:min} as a model of a fibration near a minimum of $L$.

\begin{example}\label{ex:max}
Now we will essentially turn Example \ref{ex:min} upside down. Again parametrize a 3-ball $B^3$ as $I\times I\times I$, and let $\pi:B^3\to I$ be projection to the third factor.  Let $f_0:h^{-1}(0)\to S^1$ agree with the function $f_1$ of Example \ref{ex:min}. Fix an arc $\gamma$ properly embedded in $B^3$ so that $\pi(\partial\gamma)=0$, and $\pi|_{\gamma}$ is Morse with exactly one local maximum that is contained in $\pi^{-1}(1/2)$, and projecting $\gamma$ to $I\times I\times 0$ yields an arc contained in a level set of $f_0$. 
In the bottom row of Figure \ref{fig:Lminimum} we illustrate an extension of $f_0$ to a movie of singular fibrations $f_t:h^{-1}(t)\to S^1$. This is a concatenation of movies (vii) and (iii) of Figure \ref{fig:singulararrows}, so the arrows are consistent with the requirements of of Proposition \ref{prop:totalfunction}. The leaves of the resulting total function $F:M\to S^1$ are nonsingular in their interior, but there are two $\times$-singularities in leaves of $f_0$.
\end{example}

\begin{figure}
\centering
\labellist
\pinlabel{$t=0$} at 31 17
\pinlabel{$t=1$} at 342 17
\endlabellist
\includegraphics[width=100mm]{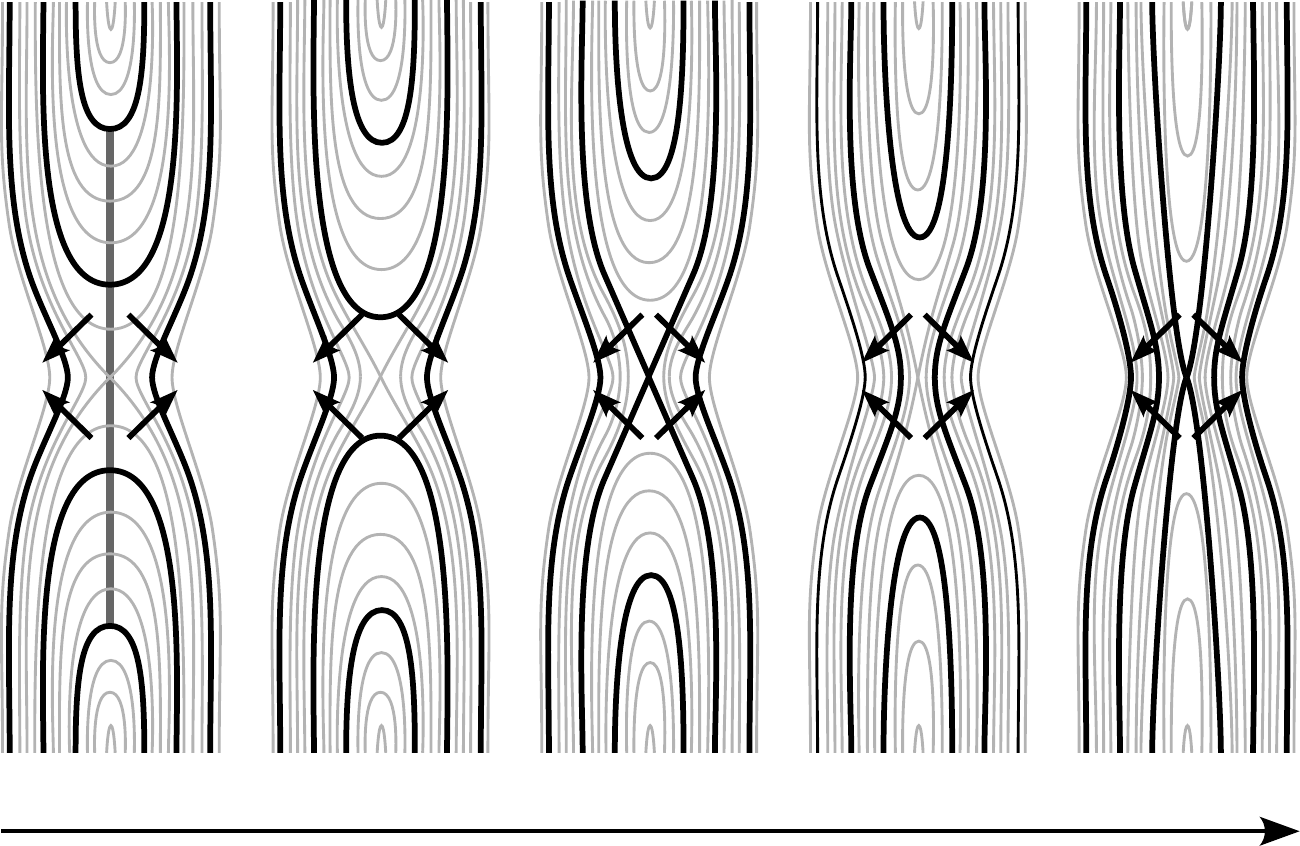}
\caption{The movie of Example \ref{ex:band} (band movie). {\bf{Leftmost frame:}} A neighborhood of the arc $\eta$ (bold vertical arc) in the domain of $f_0$. The $\times$-singularity $p$ of $f_0$ lies at the center of $\eta$. Also in bold, we draw the leaf $f_0^{-1}(\theta)$ that contains the endpoints of $\eta$. The leaf $f_0^{-1}(\theta)$ may intersect $\eta$ in more points as shown (or even include the point $p$). The circular length condition on $\eta$ ensures that $\eta$ intersects $f_0^{-1}(\theta)$ the same number of times on either side of $p$. {\bf{From left to right:}} $f_t$ for increasing $t$.
}\label{fig:bandmovie}
\end{figure}

When building a fibration of a knot or link in braid position, we will use the movie in the following example as a model of the fibration near a crossing. It may be helpful to think of a braid closure as obtained by performing band surgeries on an unlink, with bands corresponding to crossings in the braid. In the following example, we show how to change $f_t^{-1}(\theta)$ by band surgery to the leaves along an arc $\eta$ (the core of a band) as $t$ increases from $0$ to $1$, assuming that $f_0$ is sufficiently nice. Since we will use this model several times, we will refer to the movie of Example \ref{ex:band} as a {\emph{band movie}}. We suggest that the reader consider Figure \ref{fig:bandmovie} before reading the text of Example \ref{ex:band}: we draw an arc $\eta$ connecting one leaf of $f_0$ to itself, such that the arc crosses over exactly one $\times$-singularity, meeting the outward regions. We then build a movie of singular fibrations in which the arc is contracted, so that in $f_1$, each leaf of $f_0$ that met $\eta$ has been changed by some number of band surgeries.

\begin{example}\label{ex:band}
Parametrize a 3-ball $M^3$ as $I\times I\times I$, and let $h:M^3\to I$ be projection to the third factor.
Let $f_0:\pi^{-1}(0)\to S^1$ be smooth so that $f$ has one $\times$-singularity at a point $p$. 
Fix arrow decorations at $p$. Let $\eta$ be an arc in $I\times I=h^{-1}(0)$ with $\eta(1/2)=p$, meeting the outward regions near $p$. (Recall arrows near $\times$-singularities point out of outward regions and into inward regions.) Take $\eta$ to be transverse to all leaves of $f_0$ away from $p$. Assume $f_0(\eta(0))=f_0(\eta(1))=\theta$ for some $\theta$ and that in the intervals $[0,1/2]$ and $[1/2,1]$, $f_0\circ\eta$ winds in equal magnitude about $S^1$. That is, assume there is an $l\in\mathbb{R}$ with 
\[l=\int_{0}^{1/2}f_0(\eta(t))dt=-\int_{1/2}^{1}f_0(\eta(t))dt.\]

Let $b:I\times I\to I$ be a bump function supported on a small neighborhood of $\eta$, with $b(\eta)=1$. Then set
\[f_t(x)=f_0(x)-t\cdot l\cdot b(x).\] We draw $f_t$ in Figure \ref{fig:bandmovie}. Note that $f_1(p)=f_0(p)-l=\theta$. Our requirement that $\eta$ pass through the outward regions near $p$ ensures that the decoration arrows on $f_0$ actually correspond to the movie $f_t$. 

Since this movie satisfies the first two conditions of Proposition \ref{prop:totalfunction}, the total function $F$ has leaves that are nonsingular away from $h^{-1}(\{0,1\})$.
\end{example}

\section{Proof of Theorem \ref{thm:main}}

Finally, we will re-prove the result of Stallings \cite{stallings} by explicitly constructing a fibration on the complement of any homogenous braid closure. From now on, let $L$ be a link in braid position as the closure of a homogeneous braid $\beta$ on $b$ strands. Let $w=w_1\cdots w_n$ be a homogenous braid word for $\beta$, so $w_1,\ldots, w_n\in\{\sigma_1^{s_1},$ $\ldots,$ $\sigma_{b-1}^{s_{b-1}}\}$ for some $s_1,\ldots, s_{b-1}\in\{-1,1\}$, with each $\sigma_i^{s_i}$ appearing at least once in $w$.

Since $L$ is in braid position we have a decomposition $S^3=B^3_+\cup (S^2\times I) \cup B^3_-$, where $L$ meets $S^2\times I$ in $\beta$ and a trivial $b$-stranded braid, while $L$ meets each of $B^3_+$ and $B^3_-$ in a trivial $b$-stranded tangle. The unique minimum of the standard Morse function $h:S^3\to\mathbb{R}$ is in $B^3_-$ and is also a minimum of $L$, while the unique maximum is in $B^3_+$ and is a maximum of $L$. 

Isotope as necessary so that the minima of $L$ are in ascending order according to the projection of $\beta$ described by $w$. That is, from bottom to top, the $i$-th minimum of $L$ lies below the $i$-th endpoint of $\beta\cap\partial B^3_-$. 
Similarly isotope the maxima to be in descending order, so from top to bottom the $i$-th maximum of $L$ is in the strand above the $i$-th endpoint of $\beta\cap\partial B^3_+$. (This is illustrated in Figure \ref{fig:stepschematic}.)


Reparametrize $h$ so that $h^{-1}((-\infty,0])$ and $h^{-1}([3,\infty))$ are both balls meeting $L$ in a single arc, and so that 
$h^{-1}(1)=\partial B^3_+$ while $h^{-1}(2)=\partial B^3_+$. (Again, see Figure \ref{fig:stepschematic}.)

We will build a movie of singular fibrations on $M^3:=S^3\setminus\nu(L)$ with respect to $h|_M$ whose total function has nonsingular leaves as in Proposition \ref{prop:totalfunction} and satisfies the conditions of Theorem \ref{thm:main}, thus proving Theorem \ref{thm:main}. In Figure \ref{fig:stepschematic}, we give a schematic of the position of $L$ and which portions of $M$ are fibered in each step of the construction.


\begin{figure}
\centering
\labellist
\pinlabel{$0$} at 85 40
\pinlabel{$1$} at 85 145
\pinlabel{$2$} at 85 300
\pinlabel{$3$} at 85 400
\pinlabel{\textcolor{gray}{$t_2$}} at 25 171
\pinlabel{\textcolor{gray}{$t_3$}} at 25 192
\pinlabel{\textcolor{gray}{$\vdots$}} at 25 238
\pinlabel{\textcolor{gray}{$t_n$}} at 25 272
\pinlabel{\rotatebox{90}{\textcolor{gray}{Defined in Step 2}}} at -17 220
\pinlabel{Step 1} at 465 70
\pinlabel{Step 2} at 465 217
\pinlabel{Step 3} at 465 366
\endlabellist
\hspace{-.7in}\includegraphics[width=55mm]{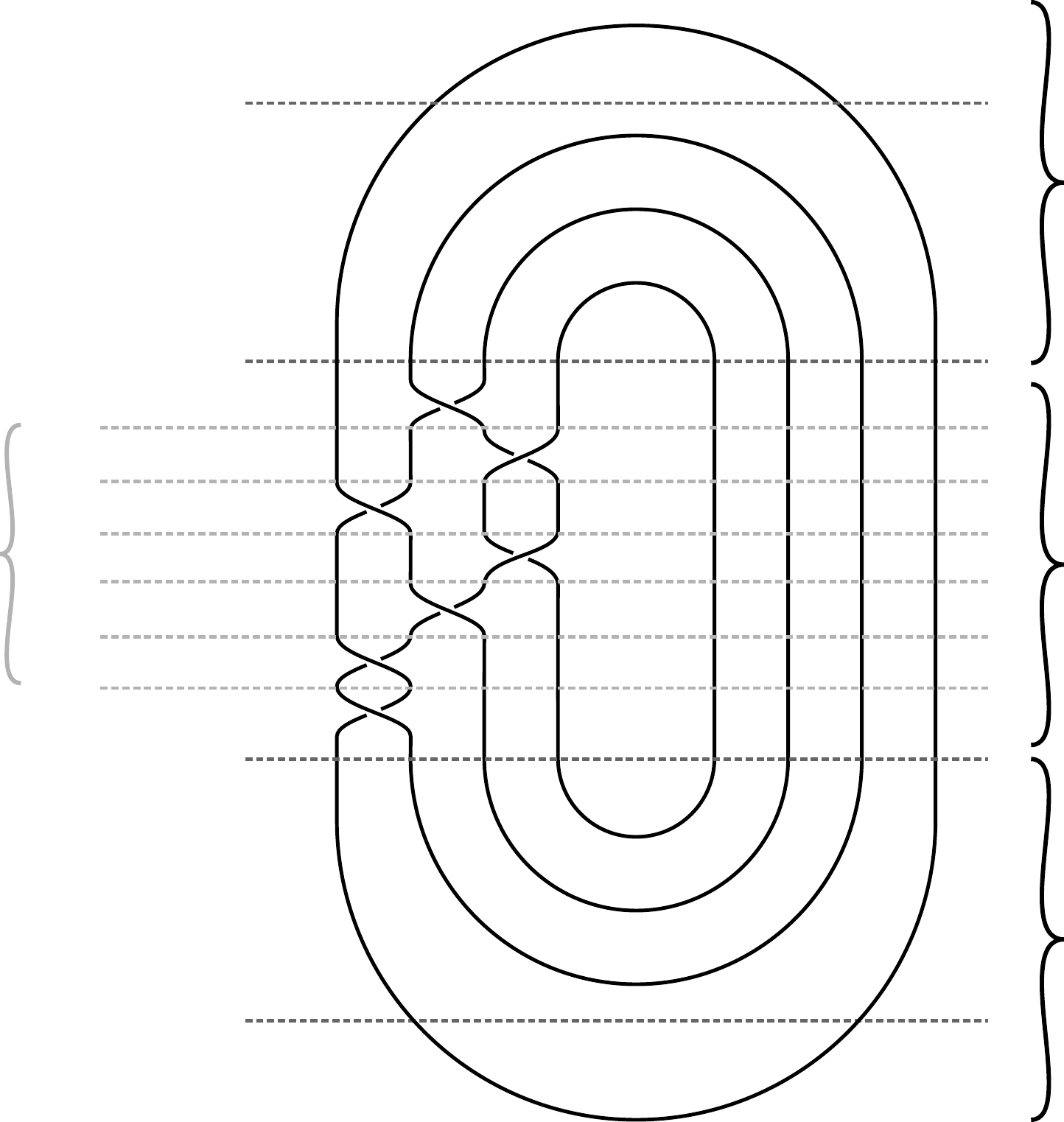}
\caption{A schematic of the values of $h|_M$ relevant to each step of the construction in Theorem \ref{thm:main}.}\label{fig:stepschematic}
\end{figure}

\subsection{Step 1: fibering $B^3_-\setminus\nu(L)$}


Since $h|_M^{-1}((-\infty,0])$ is a solid torus, it can be fibered by meridional disks. Define $f_t$ for $t\le0$ so that the total function from $-\infty<t\le 0$ is this meridional disk fibration, as in Figure \ref{fig:step1}. We call the two boundary components of $h|_M^{-1}(0)$ by the names $C_1$ and $C_1'$.

\begin{figure}
\centering
\labellist
\pinlabel{$t$} at 198 12
\pinlabel{$t=0$} at 362 12
\endlabellist
\includegraphics[width=110mm]{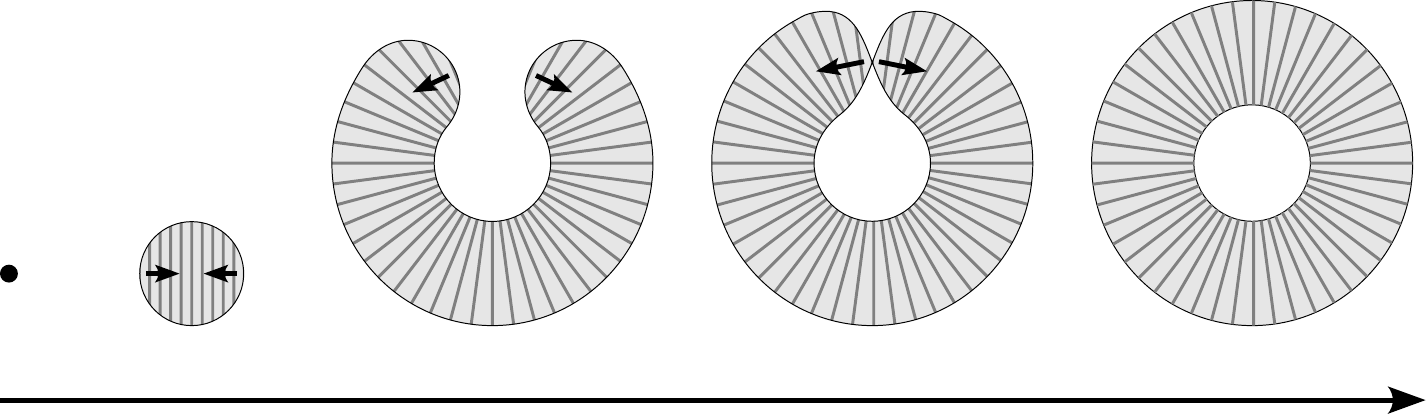}
\caption{The movie $f_t$ for $t\le 0$. The domain of $f_1$ is the annulus $h|_M^{-1}(1)\setminus\nu(L)$, i.e.\ a 2-sphere with neighborhoods of two points in $L$ deleted. Every leaf of $f_0$ is an arc between these two points (restricted to $M$). This movie is a concatenation of movies (ii) and (viii) of Figure \ref{fig:singulararrows}.}
\label{fig:step1}
\end{figure}

Now we extend $f_t$ across all of $B^3_-\setminus\nu(L)$. The reader may find it helpful to begin by consulting Figure \ref{fig:step1moreminima}, in which we show how to adapt a singular fibration of a planar surface (in this case, some $h^{-1}(t)$ slightly below a local minimum of $h|_L$) to a planar surface with two additional boundary components (in this case, $h^{-1}(t')$ slightly above a local minimum of $h|_L$).

Note $L\cap B^3_-$ has $b$ components, which we call $A_1,\ldots, A_{b}$ in ascending order. The arc $A_1$ is the only arc that meets $h^{-1}(0)$. From $t=0$ to $t=\epsilon$, we extend $f_t$ via isotopy, rotating an arc beneath $A_2$ by $180^\circ$ as depicted in Figure \ref{fig:step1moreminima}. The sign of this rotation depends on $s_{1}$. If $s_{1}=1$ (i.e.\ $\sigma_{1}$ appears in $w$), then we take the rotation to be counterlockwise, as in the top row of Figure \ref{fig:step1moreminima}. If $s_1=-1$ (i.e.\ $\sigma_1^{-1}$ appears in $w$), then we take the rotation to be clockwise, as in the bottom row of Figure \ref{fig:step1moreminima}. 

\begin{figure}
\centering
\labellist
\pinlabel{$t$} at 275 161
\pinlabel{$t$} at 275 13
\pinlabel{$s_{i-1}=1$} at 275 274
\pinlabel{$s_{i-1}=-1$} at 275 127
\endlabellist
\includegraphics[width=126mm]{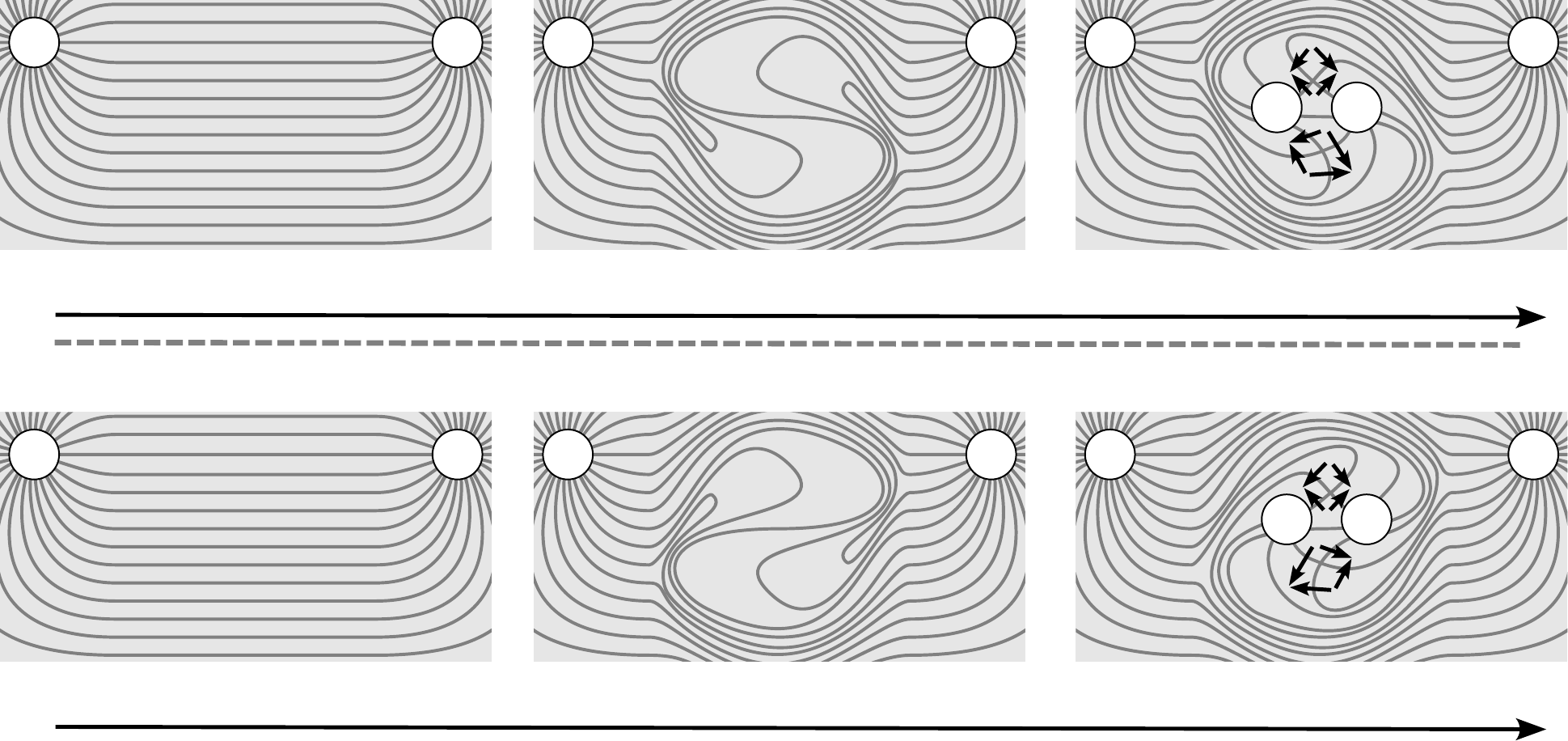}
\caption{An illustration of Step 1. In both rows, we obtain $f_t$ by isotoping $f_0$ and extending above the minimum of $A_i$ via as band movie (as in Example \ref{ex:band}). {\bf{Top row:}} $s_{i-1}=1$, so we rotate counterclockwise. {\bf{Bottom row:}} $s_{i-1}=-1$, so we rotate clockwise.}\label{fig:step1moreminima}
\end{figure}

We now extend $f_t$ above the minimum of $A_2$ via the movie of  
Example \ref{ex:min}. This adds two new boundary components to the domain of $f_t$ (for the greatest value of $t$ for which $f_t$ is currently defined), which we call $C_2$ and $C_2'$.

Now we can explain why we first performed a $180^\circ$ rotation. Roughly, we want all boundary components of $h^{-1}(t)$ in the ``left" half of the page to correspond to meridians of strands in the braid $\beta$, and the boundary components in the ``right" half of the page to correspond to vertical strands, as suggested by Figure \ref{fig:stepschematic}. The $180^\circ$ rotation causes the ``left" boundaries in Figure \ref{fig:step1moreminima} to have one sign and the ``right" boundaries in Figure \ref{fig:step1moreminima} to have the opposite. The purpose of having the direction of rotation determined by $s_1$ will be made clear in the next step.

Repeat for $A_i$, $i=3,\ldots, b$ (using a rotation of sign $s_{i-1}$) so that $f_t$ is defined for all $t\le 1$ and $h_M^{-1}(1)$ has boundary components $C_1,C_1',\ldots,C_b,C_b'$. We illustrate the leaves of $f_1$ and arrow decorations near the $2(b-1)$ resulting $\times$-singularities in the left half of Figure \ref{fig:step1allminima}.

\begin{figure}
\centering
\labellist
\small
\pinlabel{$C_1$} at 180 420
\pinlabel{$C_1'$} at 242 420
\pinlabel{$C_b$} at 180 165
\pinlabel{$C_b'$} at 242 165
\pinlabel{$C_1$} at 642 420
\pinlabel{$C_1'$} at 580 420
\pinlabel{$C_b$} at 642 165
\pinlabel{$C_b'$} at 550 165
\endlabellist
\includegraphics[width=126mm]{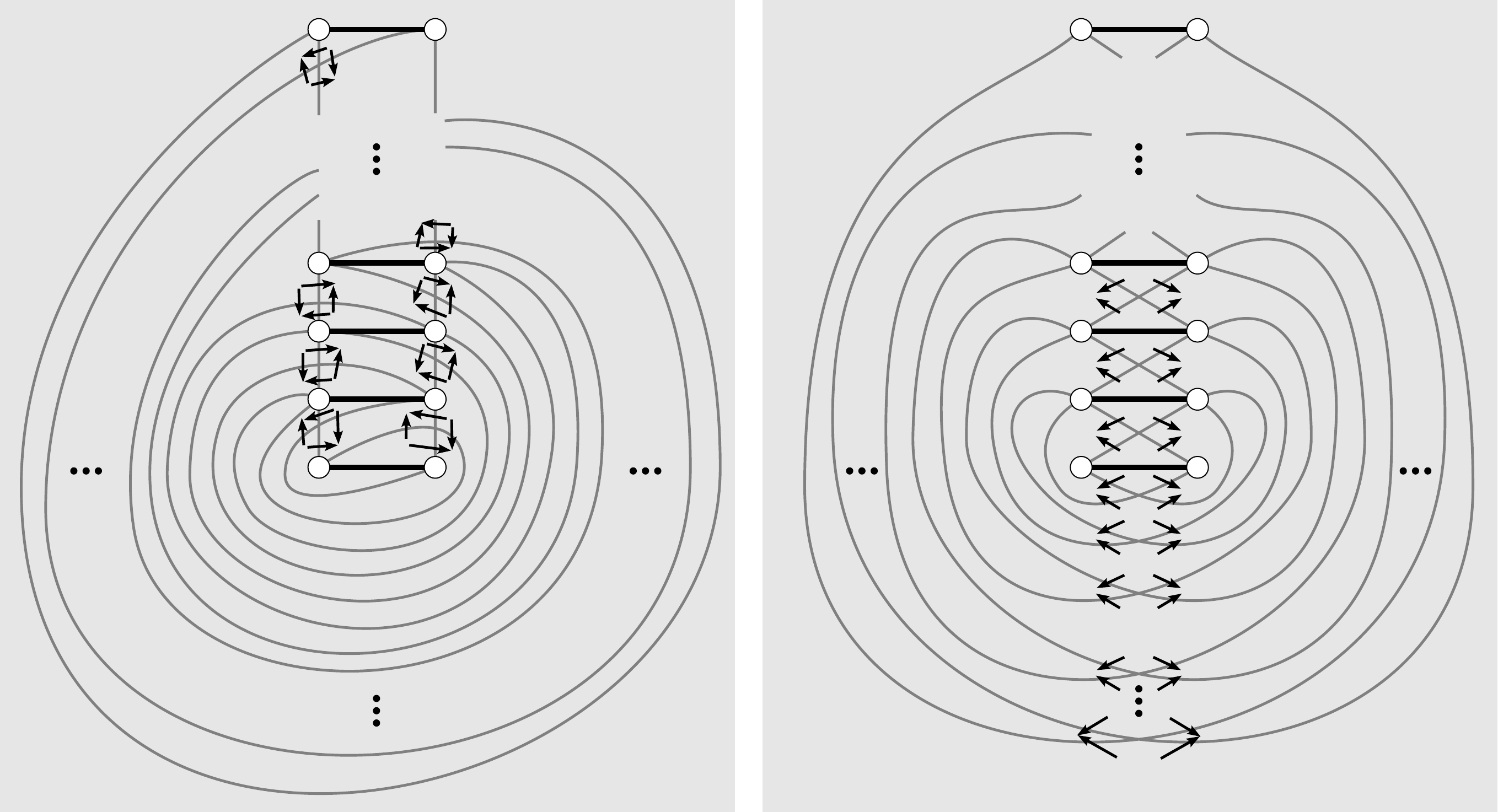}
\caption{{\bf{Left:}} A contour map of $f_1$ as obtained in Step 1. From top to bottom, the horizontal pairs of boundary components are near $A_1,\ldots, A_b$. In this figure, we have assumed $s_1=1$, $s_{b-4}=1$, $s_{b-3}=-1$, $s_{b-2}=-1$, $s_{b-1}=1$. The left column of boundary components from top to bottom are $C_1,\ldots, C_b$, while the right column of boundary components are from top to bottom $C_1',\ldots, C_b'$. In bold, we draw the projections of the arcs $L\cap B^3_-$ to $h|_M^{-1}(1)$. Each of these arcs is in a level set of $f_1$. {\bf{Right:}} Another contour map obtained by isotoping the lefthand diagram. This isotopy rotates each pair $C_i, C_i'$ ($i=2,\ldots, b$) through $180^{\circ}$ in the direction determined by $s_i$, exchanging the two boundary components.}\label{fig:step1allminima}
\end{figure}

In the right half of Figure \ref{fig:step1allminima} we draw an alternative view of the leaves of $f_1$; the first view will be more useful but it might be easier for a reader to see that the second arises from applying $b-1$ instances of the movie of Example \ref{ex:min} to $f_0$. The two images of Figure \ref{fig:step1allminima} are related by an isotopy that rotates horizontal pairs of boundary components through 180$^\circ$, exchanging the two boundaries.


\subsection{Step 2: fibering $h|_M^{-1}[1,2]$}
Let $1=t_1<t_2<\cdots<t_n<t_{n+1}=2$ so that $\beta$ meets $h^{-1}[t_i,t_{i+1}]$ in a braid described by the $i$-th letter $w_i$ of $w$. Our goal is to extend $f_t$ across each $[t_i,t_{i+1}]$ by viewing the $i$-th crossing of $\beta$ as a band move and applying the band movie of Example \ref{ex:band}.

Say $w_1=\sigma_j^{s_j}$. In the left of Figure \ref{fig:sigmanew}, we illustrate an open set of $h_M^{-1}(1)$ that has boundary circles $C_{j-1}$, $C_{j-1}$, $C_j$, $C_j'$, $C_{j+1}$, and $C_{j+1}'$. (If $j=1$, then ignore the top portion including $C_{j-1}, C_{j-1}$.) We assume $s_j=1$; the $s_j=-1$ case is similar (mirror along a horizontal axis and take the included boundary components to be $C_j,C_j',C_{j+1},C_{j+1}',C_{j+2},C_{j+2}'$). This subspace is easily visible in the left of Figure \ref{fig:step1allminima}. We give two cases, depending on the sign of $s_{j-1}$. 
In Figure \ref{fig:sigmanew}, we illustrate how to extend $f_t$ across $t\in[1,t_2]$. We first perform a band movie along the illustrated arc $\eta$. (One can check that the endpoints of $\eta$ can be taken to have the same image under $f_1$, as required.) Then $C_j,C_{j+1}$ exchange positions as $t$ increases, after which we exchange their labels. We see that the contour set of $f_{t_2}$ agrees with $f_1$ away from a disk containing $C_j,C_j',C_{j+1},C_{j+1}'$.

\begin{figure}
\centering
\labellist
\pinlabel{$t_i$} at 51 201
\pinlabel{$t_i$} at 51 11
\pinlabel{$t_{i+1}$} at 300 201
\pinlabel{$t_{i+1}$} at 300 11
\pinlabel{$j-1$} at -16 335
\pinlabel{$j$} at -16 287
\pinlabel{$j+1$} at -16 235
\pinlabel{$j-1$} at -16 145
\pinlabel{$j$} at -16 97
\pinlabel{$j+1$} at -16 45
\endlabellist
\includegraphics[width=90mm]{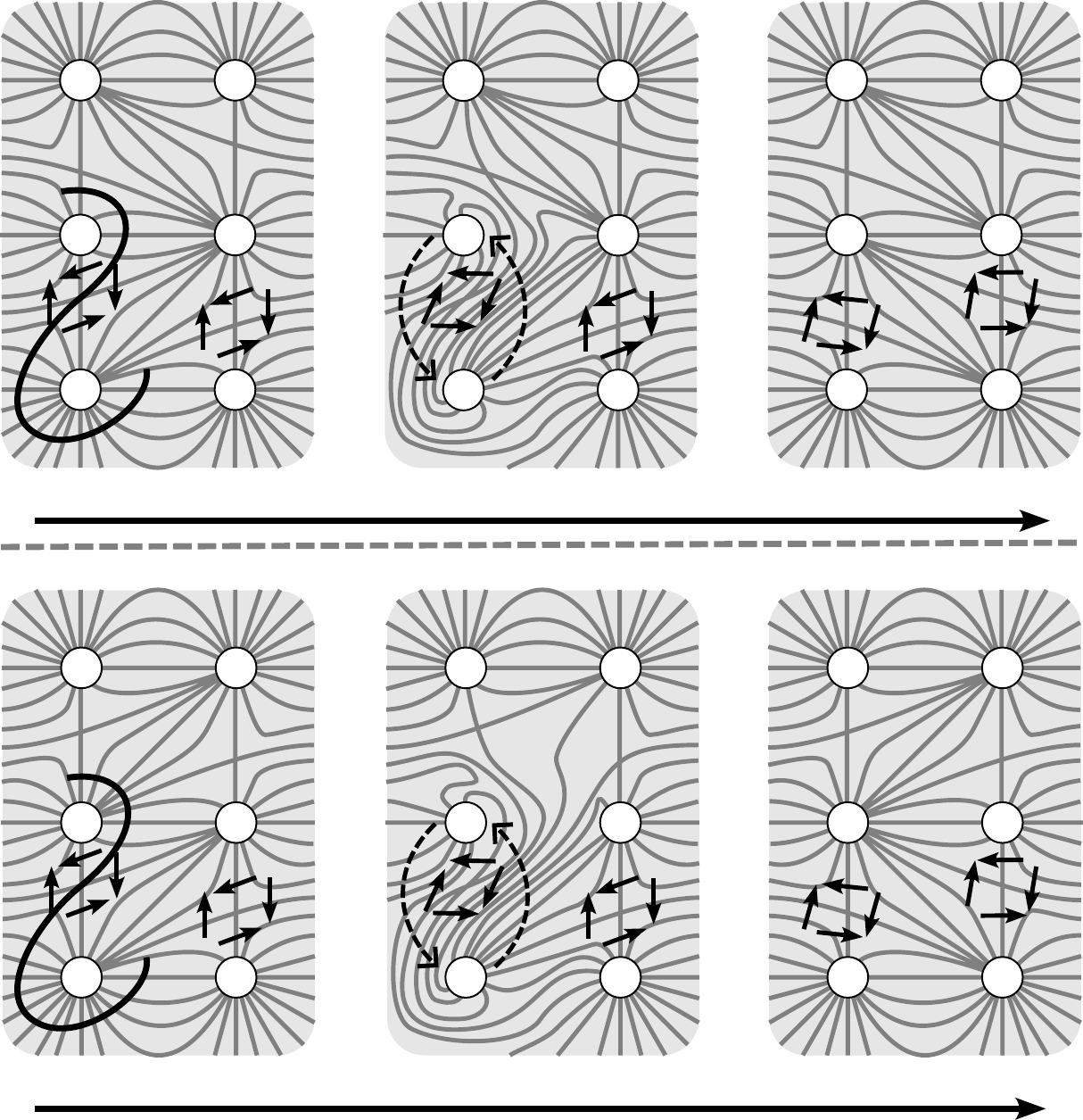}
\caption{Here we assume $s_j=1$.  We give two possible local models depending on the behavior of $f_{t_i}$ in a box containing $C_{j-1},C_{j-1}',C_j,C_j'$.  This depends on the sign of $s_{j-1}$ and whether we have already extended $f_t$ across some interval corresponding to $\sigma_{j-1}^{s_{j-1}}$. {\bf{Left:}} An open subset of $h_M^{-1}(t_i)$. We include an arc $\eta$ satisfying the conditions of the band movie. {\bf{Middle:}} We perform a band movie and indicate with dashed arrows the path of $C_j,C_{j+1}$ as we move above the crossing of $\beta$ corresponding to $w_i$. {\bf{Right:}} An open subset of $h_M^{-1}(t_{i+1})$.}\label{fig:sigmanew}
\end{figure}


We similarly extend $f_t$ along each interval $t\in[t_i,t_{i+1}]$ unless the letter $w_i$ has already appeared in $w$. In this case, suppose $w_i=\sigma_j^{s_j}$ and $w_k=\sigma_j^{s_j}$ for some $k<i$. Then we must have performed the above operation in a neighborhood of $C_j,C_j',C_{j+1},C_{j+1}'$ when extending $f_t$ along $t\in[t_k,t_{k+1}]$. Therefore, the leaves of $f_{t_i}$ have the local model of Figure \ref{fig:sigmanew} (right), again assuming $s_j=1$ (the case $s_j=-1$ is similar). In Figure \ref{fig:sigmanewsecondtime}, we show how to extend $f_t$ to $t\in[t_i, t_{i+1}]$. We perform the indicated band move and obtain a singular fibration that agrees with the middle stage of Figure \ref{fig:sigmanew}, and then proceed as in Figure \ref{fig:sigmanew}.

\begin{figure}
\centering
\labellist
\pinlabel{$t_i$} at 51 202
\pinlabel{$t_i$} at 51 12
\pinlabel{$t_{i+1}$} at 300 202
\pinlabel{$t_{i+1}$} at 300 12
\pinlabel{$j-1$} at -16 336
\pinlabel{$j$} at -16 288
\pinlabel{$j+1$} at -16 236
\pinlabel{$j-1$} at -16 146
\pinlabel{$j$} at -16 98
\pinlabel{$j+1$} at -16 46
\endlabellist
\includegraphics[width=90mm]{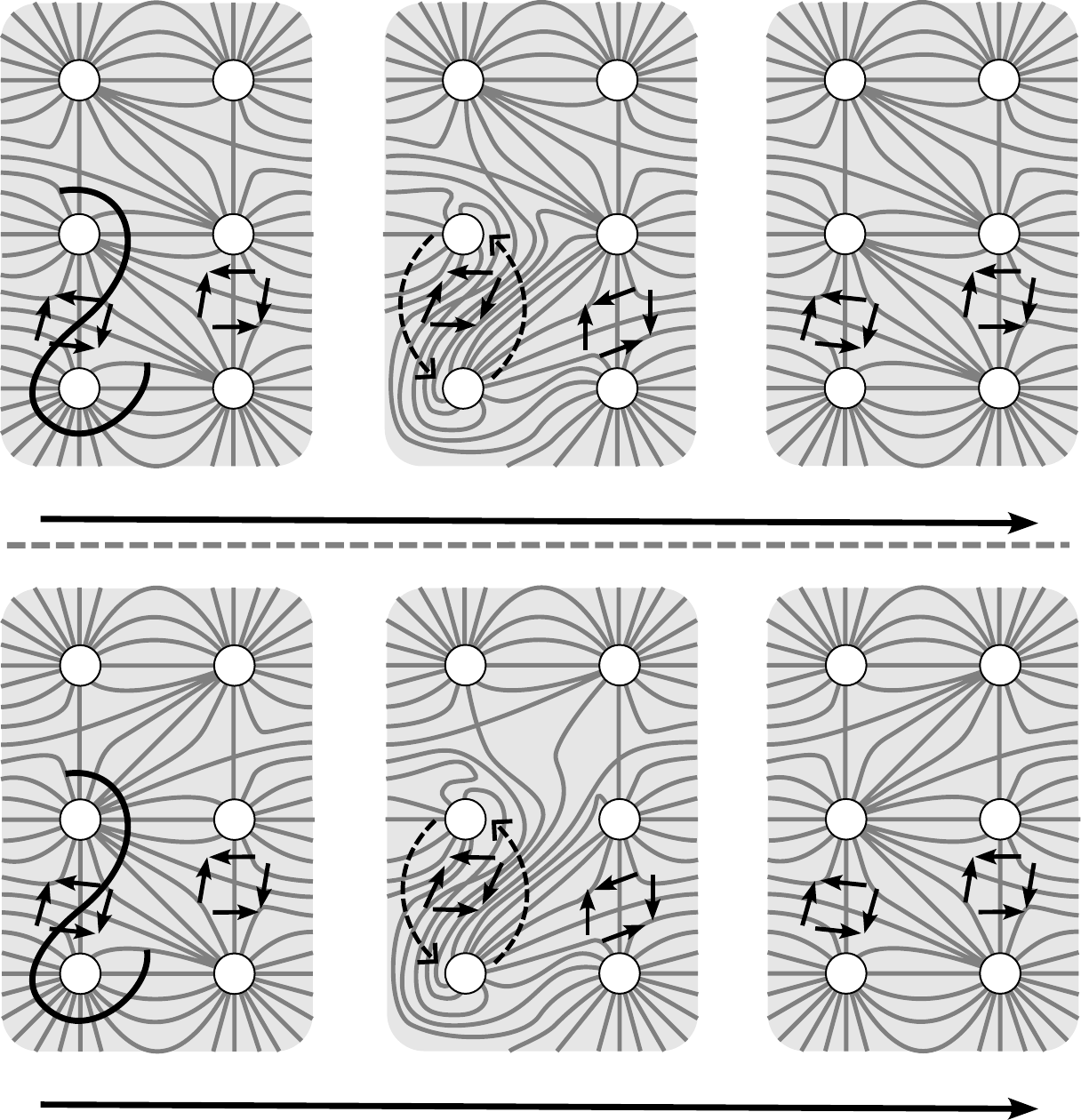}
\caption{Here, $s_j=1$ and $w_i=\sigma_j^{s_j}$ is not the first instance of $\sigma_j^{s_j}$ in $w$. We give two cases (top row and bottom row), depending on the behavior of $f_{t_i}$ in a disk containing $C_{j-1},C_{j-1}',C_j,C_j'$. (That is, depending on the sign of $s_{j-1}$ and whether $\sigma^{s_{j-1}}$ appears in the first $i-1$ letters of $w$.) {\bf{Left:}} a local model of $f_{t_i}$. We indicate an arc along which we may perform a band movie. {\bf{Middle:}} We perform the band movie. Note that this singular fibration agrees with that of the middle of \ref{fig:sigmanew}. {\bf{Right:}} An open subset of $h_M^{-1}(t_{i+1})$.
}\label{fig:sigmanewsecondtime}
\end{figure}

Thus, we may inductively extend $f_t$ over $t\in[t_i,t_{i+1}]$ for increasing $i$. If $w_i$ is the first instance of $\sigma_j^{s_j}$ in $w$, then we use the movie of Figure \ref{fig:sigmanew} (mirrored along a horizontal axis if $s_j=-1)$. If $w_i$ is not the first instance of $\sigma_j^{s_j}$, then we use the movie illustrated in Figure \ref{fig:sigmanewsecondtime} (again mirrored if $s_j=-1$).

Because every $\sigma_j^{s_j}$ appears at least once in $w$, the singular fibration $f_2$ is as in Figure \ref{fig:step3allmaxima}.
\begin{figure}
\centering
\centering
\labellist
\small
\pinlabel{$C_1$} at 172 422
\pinlabel{$C_1'$} at 234 422
\pinlabel{$C_b$} at 172 165
\pinlabel{$C_b'$} at 234 165
\pinlabel{$C_1$} at 628 422
\pinlabel{$C_1'$} at 570 422
\pinlabel{$C_b$} at 634 165
\pinlabel{$C_b'$} at 532 165
\endlabellist
\includegraphics[width=126mm]{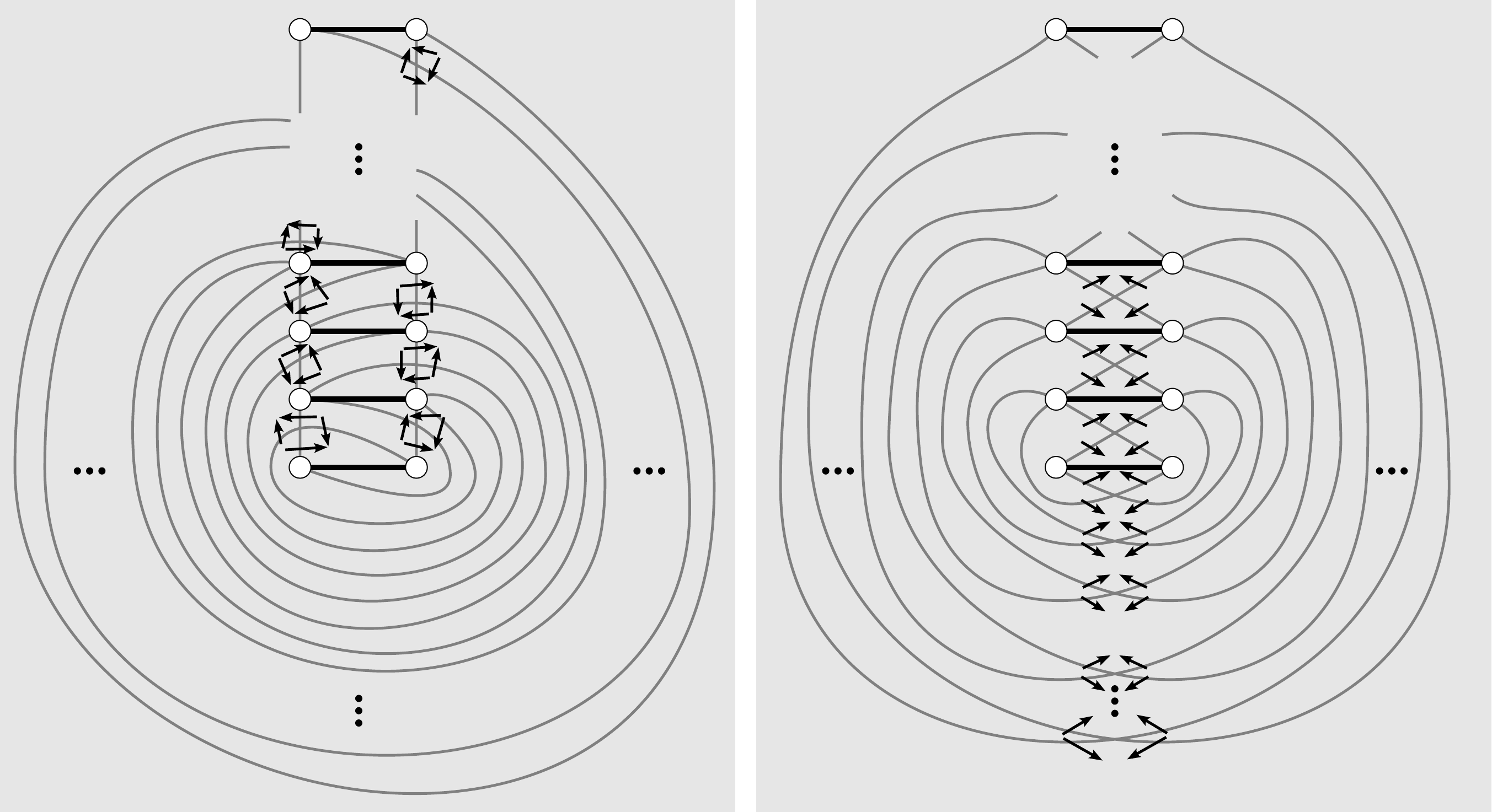}
\caption{{\bf{Left:}} A contour map of $f_2$ as obtained in Step 2. From top to bottom, the horizontal pairs of boundary components are $C_1,C_1';\ldots, C_b, C_b'$. As in Figure \ref{fig:step1allminima}, in this figure we have assumed $s_1=1$, $s_{b-4}=1$, $s_{b-3}=-1$, $s_{b-2}=-1$, $s_{b-1}=1$. In bold, we draw the projections of the arcs $L\cap B^3_+$ to $h|_M^{-1}(2)$.  {\bf{Right:}} Another contour map obtained by isotoping the lefthand diagram, included only for visualization purposes. This isotopy rotates each pair $C_i, C_i'$ $(i=2,\ldots, b$) through $180^\circ$ in the direction determined by $s_i$, exchanging the two boundary components.
}\label{fig:step3allmaxima}
\end{figure}

\subsection{Step 3: fibering $B^3_+\setminus\nu(L)$.}
We have so far extended $f_t$ to a movie of singular fibrations on $M$ for $t\le 2$.

While $f_1$ and $f_2$ (Figures \ref{fig:step1allminima} and \ref{fig:step3allmaxima}) seem similar, there is a key difference between these two singular fibrations. Projections of the arcs $L\cap B^3_-$ to $h|_M^{-1}(1)$ form a level set of $f_1$ that lies toward {\emph{outward}} regions of the $\times$-singularities, as in the movie of Example \ref{ex:min}. However, projections of arcs in $L\cap B^3_+$ to $h|_M^{-1}(2)$ form a level set of $f_2$ that lies toward {\emph{inward}} regions of the $\times$-singularities, as in the movie of Example \ref{ex:max}. We may thus extend $f_t$ across $t\in[2,3]$ by applying $b-1$ instances of the movie of Example \ref{ex:max}. (Note this is exactly the same as our procedure to extend $f_t$ along $t\in[0,1]$ from Step 1, turned upside down, so we abbreviate the discussion.) Then $f_3$ is a fibration of an annulus over $S^1$. We extend $f_t$ across $t\in[3,\infty)$ via the movie of Figure \ref{fig:step1} used in Step 1, turned upside down with respect to $t$. This concludes the construction of $f_t$ and hence the proof of Theorem \ref{thm:main}. 

\section{Example of a fibration}

In Figures \ref{fig:trefoilexample} and \ref{fig:trefoilexampleleaves}, we perform the above algorithm to construct a fibration $F:S^3\setminus($right-handed trefoil$)\to S^1$. In Figure \ref{fig:trefoilexample} we draw the contour set of each $f_t:=F|_{h^{-1}(t)}$. In Figure \ref{fig:trefoilexampleleaves} we highlight $F^{-1}(\theta)$ for three values of $\theta$. Each of these leaves can be easily seen to be the standard braid surface (as expected). Note that, as stated in Theorem \ref{thm:main}, $h$ restricts to the interior of each $F^{-1}(\theta)$ as a Morse function with no local minima or maxima. 

\begin{figure}
\centering
\labellist
\pinlabel{$t$} at -16 335
\pinlabel{$t$} at 485 335
\endlabellist
\includegraphics[width=126mm]{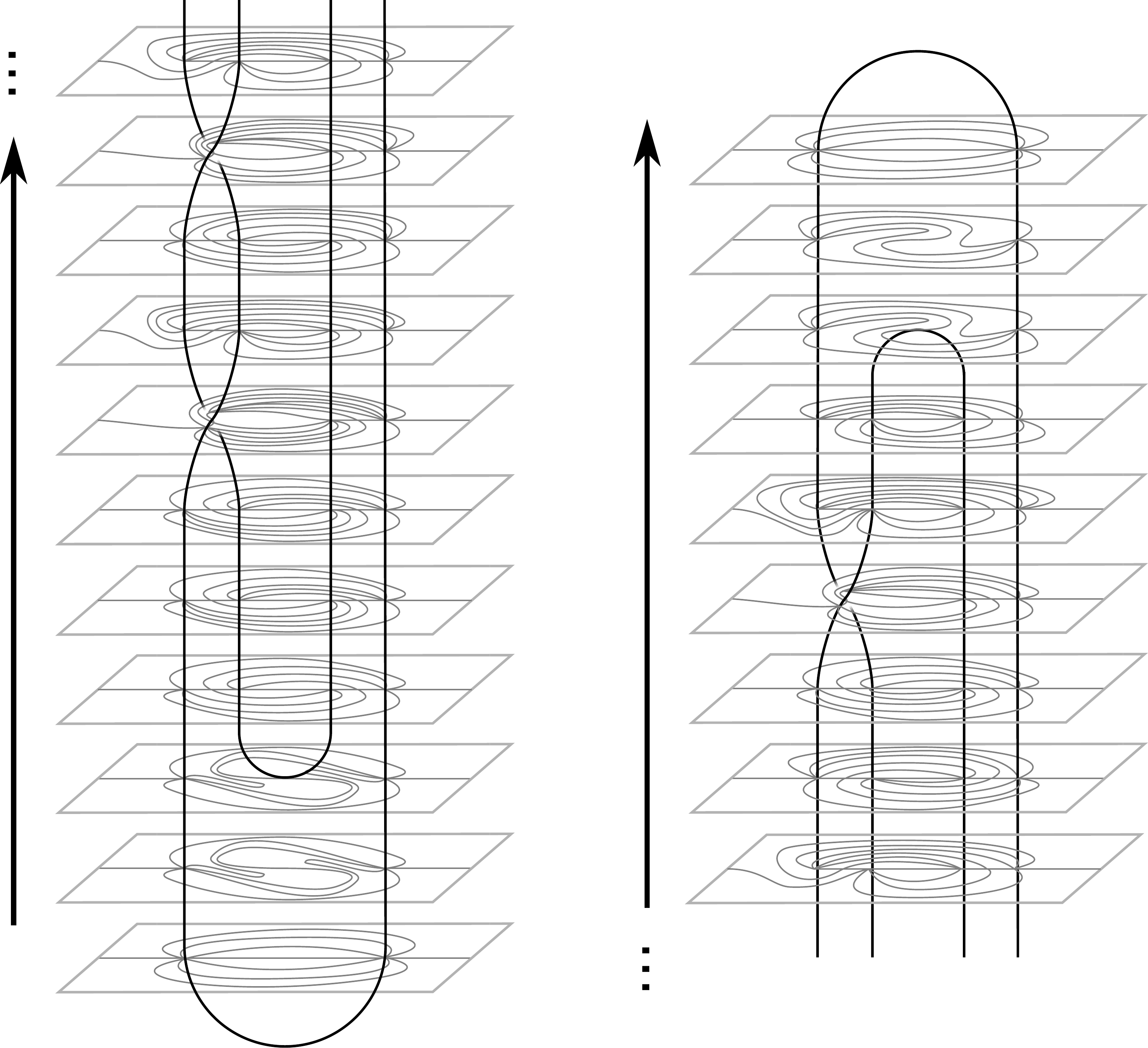}
\caption{A contour set for $f_t$ for increasing $t$, where $f_t$ is a movie of singular fibrations on the trefoil complement with total function $F$ a fibration. This movie has been constructed via the algorithm of Theorem \ref{thm:main}.}\label{fig:trefoilexample}
\end{figure}

\begin{figure}
\centering
\includegraphics[width=126mm]{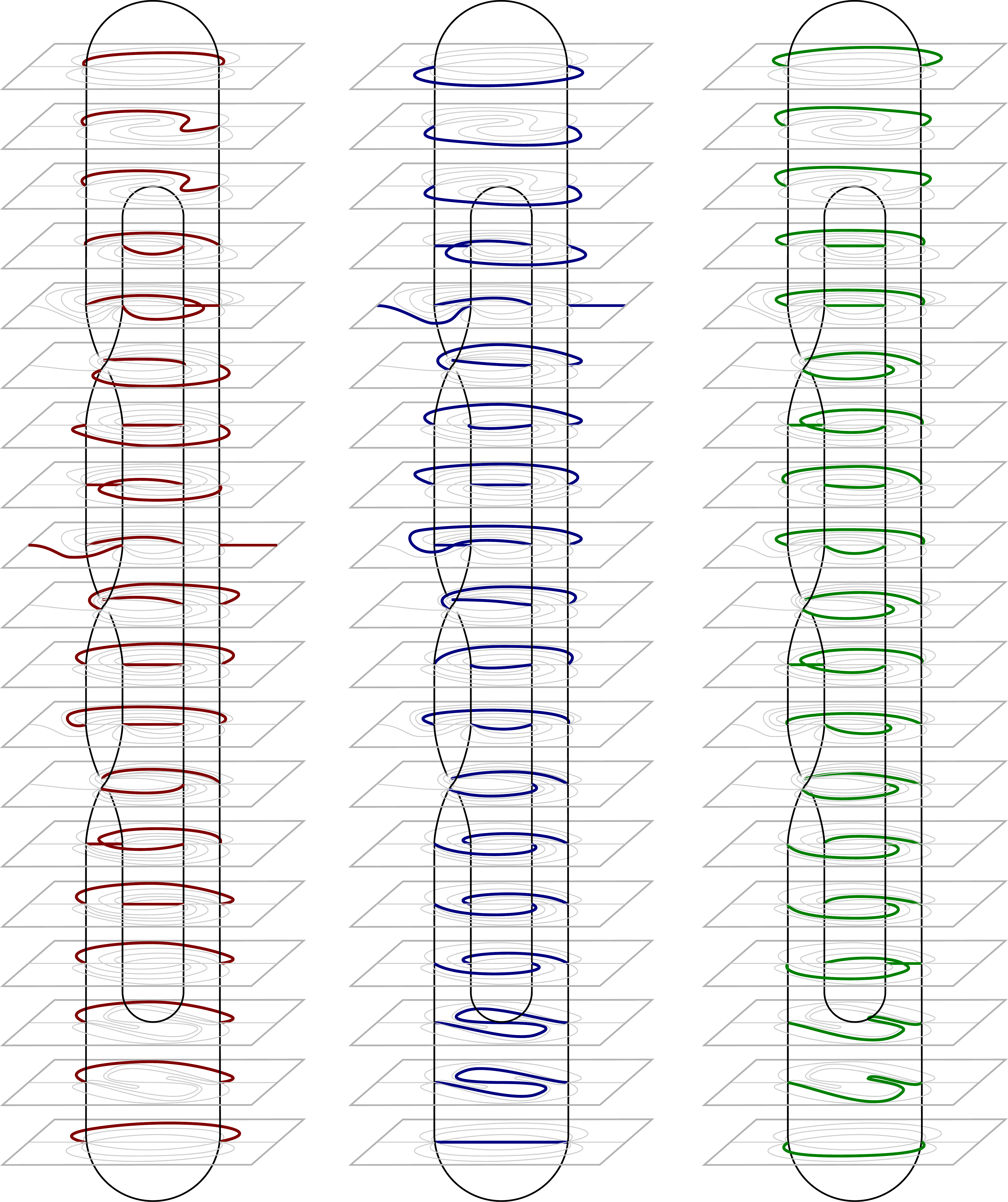}
\caption{Three fibers $F^{-1}(\theta)$ for the trefoil, constructed as in Theorem \ref{thm:main}.}\label{fig:trefoilexampleleaves}
\end{figure}

\bibliographystyle{amsplain}
\bibliography{biblio.bib}

\end{document}